\date{\today}
\newtheorem{theorem}{Theorem}
\newtheorem{proposition}{Proposition}
\newtheorem{corollary}{Corollary}
\newtheorem{lemma}{Lemma}
\theoremstyle{definition}
\newtheorem{example}{Example}
\newtheorem{definition}[theorem]{Definition}
\begin{document}

\title[On a semitopological extended bicyclic semigroup with adjoined zero]{On a semitopological extended bicyclic semigroup with adjoined zero}

\author[Oleg~Gutik and Kateryna Maksymyk]{Oleg~Gutik and Kateryna Maksymyk}
\address{Faculty of Mathematics, National University of Lviv,
Universytetska 1, Lviv, 79000, Ukraine}
\email{oleg.gutik@lnu.edu.ua, ovgutik@yahoo.com, kate.maksymyk15@gmail.com}

\keywords{Semigroup, bicyclic monoid, extended bicyclic semigroup, semitopological semigroup, locally compact}

\subjclass[2010]{20M10, 22A15.}

\begin{abstract}
In the paper it is shown that every Hausdorff locally compact semigroup topology on  the extended bicyclic semigroup with adjoined zero $\mathscr{C}_{\mathbb{Z}}^{\mathbf{0}}$ is discrete, but  on  $\mathscr{C}_{\mathbb{Z}}^{\mathbf{0}}$ there exist $\mathfrak{c}$ many different  Hausdorff locally compact shift-continuous topologies. Also, it is constructed on $\mathscr{C}_{\mathbb{Z}}^{\mathbf{0}}$ the unique minimal shift continuous topology  and the unique minimal  inverse semigroup topology.
\end{abstract}

\maketitle

\section{Introduction and preliminaries}\label{section-0}

We  follow the terminology of \cite{Carruth-Hildebrant-Koch-1983-1986, Clifford-Preston-1961-1967, Engelking-1989, Ruppert-1984}. In this paper all spaces are assumed to be Hausdorff. By $\mathbb{Z}$, $\mathbb{N}_0$ and $\mathbb{N}$ we denote the sets of all integers, non-negative integers and positive integers, respectively.

A \emph{semigroup} is a non-empty set with a binary associative operation. A semigroup $S$ is called \emph{inverse} if every $a$ in $S$ possesses an unique inverse, i.e. if there exists a unique element $a^{-1}$ in $S$ such that
  \begin{equation*}
    aa^{-1}a=a \qquad \mbox{and} \qquad a^{-1}aa^{-1}=a^{-1}.
\end{equation*}
A map that associates to any element of an inverse semigroup its
inverse is called the \emph{inversion}.

For a semigroup $S$, by $E(S)$ we denote the subset of all idempotents in $S$. If $E(S)$ is closed under multiplication, then we shall refer to $E(S)$ a as  the \emph{band of} $S$. The semigroup operation on $S$ determines the following partial order $\preccurlyeq$ on $E(S)$: $e\preccurlyeq f$ if and only if $ef=fe=e$. This order is called the {\em natural partial order} on $E(S)$. A \emph{semilattice} is a commutative semigroup of idempotents. A semilattice $E$ is called {\em linearly ordered} or a \emph{chain} if its natural partial order is a linear order. A \emph{maximal chain} of a semilattice $E$ is a chain which is not properly contained in any other chain of $E$.

The Axiom of Choice implies the existence of maximal chains in every partially ordered set. According to \cite[Definition~II.5.12]{Petrich-1984}, a chain $L$ is called an $\omega$-chain if $L$ is order isomorphic to $\{0,-1,-2,-3,\ldots\}$ with the usual order $\leqslant$ or equivalently, if $L$ is isomorphic to $\left(\mathbb{N}_0,\max\right)$.

The \emph{bicyclic semigroup} (or the \emph{bicyclic monoid}) ${\mathscr{C}}(p,q)$ is the semigroup with the identity $1$ generated by two elements $p$ and $q$ subject only to the condition $pq=1$. The bicyclic monoid ${\mathscr{C}}(p,q)$ is a combinatorial bisimple $F$-inverse semigroup (see \cite{Lawson-1998}) and it plays an important role in the algebraic theory of semigroups and in the theory of topological semigroups. For example, the well-known O.~Andersen's result~\cite{Andersen-1952} states that a ($0$--)simple semigroup is completely ($0$--)simple if and only if it does not contain the bicyclic semigroup. The
bicyclic semigroup cannot be embedded into the stable semigroups~\cite{Koch-Wallace-1957}.

A ({\it semi})\emph{topological semigroup} is a topological space with a (separately) continuous semigroup operation. An inverse topological semigroup with the continuous inversion is called a \emph{topological inverse semigroup}. A topology $\tau$ on a semigroup $S$ is called:
\begin{itemize}
  \item \emph{shift-continuous} if $(S,\tau)$ is a semitopological semigroup;
  \item \emph{semigroup}  if $(S,\tau)$ is a topological semigroup;
  \item \emph{inverse semigroup}  if  $(S,\tau)$ is a topological inverse semigroup.
\end{itemize}

The bicyclic semigroup admits only the discrete semigroup topology and if a topological semigroup $S$ contains it as a dense subsemigroup then ${\mathscr{C}}(p,q)$ is an open subset of $S$~\cite{Eberhart-Selden-1969}. Bertman and  West in \cite{Bertman-West-1976} extend this result for the case of Hausdorff semitopological semigroups. Stable and $\Gamma$-compact topological semigroups do not contain the bicyclic semigroup~\cite{Anderson-Hunter-Koch-1965, Hildebrant-Koch-1986}. The problem of  embedding of the bicyclic monoid into compact-like topological semigroups studied in \cite{Banakh-Dimitrova-Gutik-2009, Banakh-Dimitrova-Gutik-2010, Bardyla-Ravsky-2019??, Gutik-Repovs-2007}. Also in the paper \cite{Fihel-Gutik-2011} it was proved that the discrete topology is the unique topology on the extended bicyclic semigroup $\mathscr{C}_{\mathbb{Z}}$ such that the semigroup operation on $\mathscr{C}_{\mathbb{Z}}$ is separately continuous. An unexpected dichotomy for the bicyclic monoid with adjoined zero $\mathscr{C}^\mathbf{0}={\mathscr{C}}(p,q)\sqcup\{\mathbf{0}\}$ was proved in \cite{Gutik-2015}: every Hausdorff locally compact semitopological bicyclic semigroup with adjoined zero $\mathscr{C}^\mathbf{0}$ is either compact or discrete.

The above dichotomy was extended by Bardyla in \cite{Bardyla-2016} to locally compact $\lambda$-polycyclic semitopological monoids, and in \cite{Bardyla-2018a} to locally compact semitopological graph inverse semigroups, and also by the authors in \cite{Gutik-Maksymyk-2016} to locally compact semitopological interassociates of the bicyclic monoid with an adjoined zero, and in \cite{Gutik-2018} to locally compact semitopological $0$-bisimple inverse $\omega$-semigroups with compact maximal subgroups. The lattice of all weak shift-continuous topologies on $\mathscr{C}^0$ is described in  \cite{Bardyla-Gutik-2019}.

On the Cartesian
product $\mathscr{C}_{\mathbb{Z}}=\mathbb{Z}\times\mathbb{Z}$ we
define the semigroup operation as follows:
\begin{equation}\label{eq-0.1}
    (a,b)\cdot(c,d)=
\left\{
  \begin{array}{ll}
    (a-b+c,d), & \hbox{if }~b<c; \\
    (a,d),     & \hbox{if }~b=c; \\
    (a,d+b-c), & \hbox{if }~b>c,\\
  \end{array}
\right.
\end{equation}
for $a,b,c,d\in\mathbb{Z}$. The set $\mathscr{C}_{\mathbb{Z}}$ with
such defined operation will be called the \emph{extended bicyclic semigroup}~\cite{Warne-1968}.

In \cite{Fihel-Gutik-2011} the algebraic properties of $\mathscr{C}_{\mathbb{Z}}$ are described. It was proved there  that every non-trivial congruence $\mathfrak{C}$ on the semigroup $\mathscr{C}_{\mathbb{Z}}$ is a group congruence, and moreover the
quotient semigroup $\mathscr{C}_{\mathbb{Z}}/\mathfrak{C}$ is isomorphic to a cyclic group. It was shown that the semigroup
$\mathscr{C}_{\mathbb{Z}}$ as a Hausdorff semitopological semigroup admits only the discrete topology and the closure
$\operatorname{cl}_T\left(\mathscr{C}_{\mathbb{Z}}\right)$ of the semigroup $\mathscr{C}_{\mathbb{Z}}$ in a topological semigroup $T$ was studied there.

In  \cite{Gutik-Maksymyk-2017} we prove that the group $\mathbf{Aut}\left(\mathscr{C}_{\mathbb{Z}}\right)$ of automorphisms of the extended bicyclic semigroup $\mathscr{C}_{\mathbb{Z}}$ is isomorphic to the additive group of integers.

By $\mathscr{C}_{\mathbb{Z}}^{\mathbf{0}}$ we denote the extended bicyclic semigroup $\mathscr{C}_{\mathbb{Z}}$  with adjoined zero $\mathbf{0}$.

In this paper we show that every Hausdorff locally compact semigroup topology on the semigroup $\mathscr{C}_{\mathbb{Z}}^{\mathbf{0}}$ is discrete, but  on  $\mathscr{C}_{\mathbb{Z}}^{\mathbf{0}}$ there exist $\mathfrak{c}$ many Hausdorff locally compact shift-continuous topologies. Also, we construct on $\mathscr{C}_{\mathbb{Z}}^{\mathbf{0}}$ the unique minimal shift continuous topology  and the unique minimal inverse semigroup topology.


\section{Locally compact shift-continuous topologies on the extended bicyclic semigroup}\label{section-1}

We need the following simple statement:

\begin{proposition}[{\cite[Proposition~2.1$(viii)$]{Fihel-Gutik-2011}}]\label{proposition-2.1}
For every integer $n$ the set
\begin{equation*}
\mathscr{C}_{\mathbb{Z}}[n]=\left\{(a,b)\mid a\geqslant n\;\&\; b\geqslant n\right\}
\end{equation*}
is an inverse subsemigroup of $\mathscr{C}_{\mathbb{Z}}$ which  is isomorphic to the bicyclic semigroup ${\mathscr{C}}(p,q)$ by the map
\begin{equation*}
h\colon\mathscr{C}_{\mathbb{Z}}[n]\rightarrow {\mathscr{C}}(p,q), \qquad (a,b)\mapsto q^{a-n}p^{b-n}.
\end{equation*}
\end{proposition}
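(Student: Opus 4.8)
The plan is to verify the two claims directly from the multiplication rule \eqref{eq-0.1}: first that $\mathscr{C}_{\mathbb{Z}}[n]$ is closed under the semigroup operation and under taking inverses (so that it is an inverse subsemigroup), and then that the explicit map $h$ is an isomorphism onto $\mathscr{C}(p,q)$. For closure, I would take two elements $(a,b),(c,d)\in\mathscr{C}_{\mathbb{Z}}[n]$, so $a,b,c,d\geqslant n$, and split into the three cases $b<c$, $b=c$, $b>c$ of \eqref{eq-0.1}. In each case the product is one of $(a-b+c,d)$, $(a,d)$, $(a,d+b-c)$; since in the first case $c>b$ forces $a-b+c>a\geqslant n$, and in the third case $b>c$ forces $d+b-c>d\geqslant n$, while the remaining coordinates are already $\geqslant n$, every product again lies in $\mathscr{C}_{\mathbb{Z}}[n]$. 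For the inverse, one checks from \eqref{eq-0.1} that $(a,b)^{-1}=(b,a)$ (compute $(a,b)(b,a)(a,b)$ using the $b=c$ clause), and $(b,a)\in\mathscr{C}_{\mathbb{Z}}[n]$ whenever $(a,b)$ is; since $\mathscr{C}_{\mathbb{Z}}$ is inverse, so is any subsemigroup closed under inversion.

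For the isomorphism, I recall that the bicyclic monoid has the normal form $\mathscr{C}(p,q)=\{q^{k}p^{l}\mid k,l\in\mathbb{N}_0\}$ with multiplication
\begin{equation*}
q^{k}p^{l}\cdot q^{m}p^{s}=
\begin{cases}
q^{k-l+m}p^{s}, & l<m;\\
q^{k}p^{s}, & l=m;\\
q^{k}p^{s+l-m}, & l>m,
\end{cases}
\end{equation*}
obtained by repeatedly applying $pq=1$. Writing $h(a,b)=q^{a-n}p^{b-n}$, the substitution $k=a-n$, $l=b-n$, $m=c-n$, $s=d-n$ turns the three cases of the bicyclic multiplication into exactly the three cases of \eqref{eq-0.1} restricted to $\mathscr{C}_{\mathbb{Z}}[n]$: e.g. $b<c\iff l<m$ and then $h(a,b)h(c,d)=q^{k-l+m}p^{s}=q^{(a-b+c)-n}p^{d-n}=h\bigl((a,b)(c,d)\bigr)$, and similarly in the other two cases. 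Hence $h$ is a homomorphism. Bijectivity is immediate: $(a,b)\mapsto(a-n,b-n)$ is a bijection of $\mathscr{C}_{\mathbb{Z}}[n]$ onto $\mathbb{N}_0\times\mathbb{N}_0$, and $(k,l)\mapsto q^{k}p^{l}$ is the standard bijection $\mathbb{N}_0\times\mathbb{N}_0\to\mathscr{C}(p,q)$.

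I do not anticipate a genuine obstacle here — the statement is a routine translation of coordinates. The only point requiring slight care is bookkeeping the three-way case split so that the inequalities $b<c$, $b=c$, $b>c$ are seen to match $l<m$, $l=m$, $l>m$ after the shift by $n$, and to confirm that the shift does not move any product outside the cone $\{a\geqslant n,\ b\geqslant n\}$; both of these are handled by the sign observations above. (Alternatively, one could cite Proposition~\ref{proposition-2.1} as quoted from \cite{Fihel-Gutik-2011}, but the direct verification is short enough to include.)
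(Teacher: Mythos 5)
The paper gives no proof of this proposition at all: it is quoted verbatim from \cite[Proposition~2.1$(viii)$]{Fihel-Gutik-2011}, so there is nothing internal to compare your argument against. Your direct verification is correct and self-contained: the sign observations ($c>b$ forces $a-b+c>a\geqslant n$, and $b>c$ forces $d+b-c>d\geqslant n$) do establish closure under \eqref{eq-0.1}; the computation $(a,b)(b,a)(a,b)=(a,a)(a,b)=(a,b)$ correctly identifies $(b,a)$ as the inverse, so $\mathscr{C}_{\mathbb{Z}}[n]$ is closed under inversion; and the substitution $k=a-n$, $l=b-n$, $m=c-n$, $s=d-n$ matches the three cases of the bicyclic multiplication to the three cases of \eqref{eq-0.1} exactly, with bijectivity following from the uniqueness of the normal form $q^{k}p^{l}$. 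The only implicit ingredients are standard: that $\mathscr{C}_{\mathbb{Z}}$ is itself an inverse semigroup (alternatively, the isomorphism $h$ onto the inverse semigroup $\mathscr{C}(p,q)$ already yields that $\mathscr{C}_{\mathbb{Z}}[n]$ is inverse), and the normal-form description of $\mathscr{C}(p,q)$, which you state correctly. In short, your proof fills in a verification the paper delegates to a citation, and it does so without error.
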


Proposition~\ref{proposition-2.1} implies the following

\begin{corollary}\label{corollary-2.2}
For every integer $n$ the set $\mathscr{C}_{\mathbb{Z}}^{\mathbf{0}}[n]=\mathscr{C}_{\mathbb{Z}}[n]\bigsqcup\{\mathbf{0}\}$
is an inverse subsemigroup of $\mathscr{C}_{\mathbb{Z}}^{\mathbf{0}}$ which  is isomorphic to the bicyclic monoid ${\mathscr{C}}^{\mathbf{0}}$ with adjoined zero by the map $h\colon\mathscr{C}_{\mathbb{Z}}^{\mathbf{0}}[n]\rightarrow {\mathscr{C}}^{\mathbf{0}}$, $(a,b)\mapsto q^{a-n}p^{b-n}$ and ${\mathbf{0}}\mapsto {\mathbf{0}}$.
\end{corollary}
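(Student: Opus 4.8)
The plan is to transfer the isomorphism $h\colon \mathscr{C}_{\mathbb{Z}}[n]\to{\mathscr{C}}(p,q)$ of Proposition~\ref{proposition-2.1} to the semigroups with adjoined zero in the obvious way, and then check that adjoining a zero is functorial in the required sense. First I would observe that $\mathscr{C}_{\mathbb{Z}}^{\mathbf{0}}[n]=\mathscr{C}_{\mathbb{Z}}[n]\sqcup\{\mathbf{0}\}$ is a subsemigroup of $\mathscr{C}_{\mathbb{Z}}^{\mathbf{0}}$: it is closed under multiplication because $\mathscr{C}_{\mathbb{Z}}[n]$ is closed under multiplication in $\mathscr{C}_{\mathbb{Z}}$ (Proposition~\ref{proposition-2.1}) and any product involving $\mathbf{0}$ equals $\mathbf{0}\in\mathscr{C}_{\mathbb{Z}}^{\mathbf{0}}[n]$. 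It is an inverse subsemigroup because $\mathscr{C}_{\mathbb{Z}}[n]$ is inverse with $(a,b)^{-1}=(b,a)$ staying in $\mathscr{C}_{\mathbb{Z}}[n]$, and $\mathbf{0}$ is its own (unique) inverse; uniqueness of inverses in the larger semigroup $\mathscr{C}_{\mathbb{Z}}^{\mathbf{0}}$ is inherited by the subsemigroup.

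Next I would define $\overline{h}\colon\mathscr{C}_{\mathbb{Z}}^{\mathbf{0}}[n]\to{\mathscr{C}}^{\mathbf{0}}$ by $\overline{h}|_{\mathscr{C}_{\mathbb{Z}}[n]}=h$ and $\overline{h}(\mathbf{0})=\mathbf{0}$. Bijectivity of $\overline{h}$ is immediate from bijectivity of $h$ together with the fact that both adjoined zeros are single extra points not in the images. To see that $\overline{h}$ is a homomorphism, take $x,y\in\mathscr{C}_{\mathbb{Z}}^{\mathbf{0}}[n]$. If both lie in $\mathscr{C}_{\mathbb{Z}}[n]$, then $xy\in\mathscr{C}_{\mathbb{Z}}[n]$ and $\overline{h}(xy)=h(xy)=h(x)h(y)=\overline{h}(x)\overline{h}(y)$ by Proposition~\ref{proposition-2.1}. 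If at least one of $x,y$ equals $\mathbf{0}$, then $xy=\mathbf{0}$ in $\mathscr{C}_{\mathbb{Z}}^{\mathbf{0}}$, and correspondingly $\overline{h}(x)\overline{h}(y)=\mathbf{0}$ in ${\mathscr{C}}^{\mathbf{0}}$ since $\mathbf{0}$ is the zero of ${\mathscr{C}}^{\mathbf{0}}$; hence $\overline{h}(xy)=\mathbf{0}=\overline{h}(x)\overline{h}(y)$. Thus $\overline{h}$ is an isomorphism of inverse semigroups, which is exactly the assertion of the corollary.

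There is essentially no obstacle here: the only thing one has to be slightly careful about is the bookkeeping of the zero in the four possible cases of a product in $\mathscr{C}_{\mathbb{Z}}^{\mathbf{0}}[n]$ (both factors nonzero, first zero, second zero, both zero), and the remark that an isomorphism of semigroups automatically respects the inversion when both semigroups are inverse — so "inverse subsemigroup" and "isomorphism of inverse semigroups" come for free once the multiplicative statements are in place. Consequently the corollary follows from Proposition~\ref{proposition-2.1} by a direct verification, with the adjoined zero handled uniformly.
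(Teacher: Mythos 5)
Your argument is correct and is exactly the routine verification that the paper leaves implicit: the authors state the corollary as an immediate consequence of Proposition~\ref{proposition-2.1} without proof, and your case analysis (extending $h$ by $\mathbf{0}\mapsto\mathbf{0}$ and checking the products involving zero) is the standard way to make that implication explicit. No issues.
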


\begin{lemma}\label{lemma-2.3}
Let $\tau$ be a non-discrete Hausdorff shift-continuous topology on $\mathscr{C}_{\mathbb{Z}}^{\mathbf{0}}$. Then  $\mathscr{C}_{\mathbb{Z}}^{\mathbf{0}}[n]$ is a non-discrete subsemigroup of $\left(\mathscr{C}_{\mathbb{Z}}^{\mathbf{0}},\tau\right)$ for any integer $n$.
\end{lemma}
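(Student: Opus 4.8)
Since $\mathscr{C}_{\mathbb{Z}}^{\mathbf{0}}[n]$ is already known to be a subsemigroup of $\mathscr{C}_{\mathbb{Z}}^{\mathbf{0}}$ by Corollary~\ref{corollary-2.2}, the only thing to prove is that it is non-discrete in the subspace topology, and the plan is to reduce this to the behaviour of $\tau$ at the zero and then transport non-isolatedness of $\mathbf{0}$ into $\mathscr{C}_{\mathbb{Z}}^{\mathbf{0}}[n]$ via a continuous retraction. The first step is to observe that $\mathbf{0}$ is not an isolated point of $\left(\mathscr{C}_{\mathbb{Z}}^{\mathbf{0}},\tau\right)$. Indeed, if it were, then $\{\mathbf{0}\}$ would be open and hence $\mathscr{C}_{\mathbb{Z}}$ would be a clopen subsemigroup; since a product of two nonzero elements of $\mathscr{C}_{\mathbb{Z}}^{\mathbf{0}}$ is again nonzero, the translations of $\mathscr{C}_{\mathbb{Z}}$ are the restriction-corestrictions of the (separately continuous) translations of $\mathscr{C}_{\mathbb{Z}}^{\mathbf{0}}$, so $\mathscr{C}_{\mathbb{Z}}$ with the subspace topology would be a Hausdorff semitopological semigroup. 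By the rigidity theorem of Fihel and Gutik recalled in Section~\ref{section-0}~\cite{Fihel-Gutik-2011}, that topology must be discrete, and, $\{\mathbf{0}\}$ being open, this would make $\tau$ discrete, contrary to hypothesis.

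Next, fix an integer $n$, put $e=(n,n)$, and consider $\varphi\colon\mathscr{C}_{\mathbb{Z}}^{\mathbf{0}}\to\mathscr{C}_{\mathbb{Z}}^{\mathbf{0}}$, $\varphi(x)=exe$. A short case analysis from the multiplication rule~\eqref{eq-0.1} shows that $ex=xe=x$ for every $x\in\mathscr{C}_{\mathbb{Z}}[n]$ and that $exe\in\mathscr{C}_{\mathbb{Z}}[n]$ for every $x\in\mathscr{C}_{\mathbb{Z}}$; equivalently $e\,\mathscr{C}_{\mathbb{Z}}^{\mathbf{0}}\,e=\mathscr{C}_{\mathbb{Z}}^{\mathbf{0}}[n]$, so $\varphi$ is a retraction of $\mathscr{C}_{\mathbb{Z}}^{\mathbf{0}}$ onto $\mathscr{C}_{\mathbb{Z}}^{\mathbf{0}}[n]$ and $\varphi(x)=\mathbf{0}$ only for $x=\mathbf{0}$. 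Being the composition of the left translation by $e$ followed by the right translation by $e$, the map $\varphi$ is continuous with respect to $\tau$.

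Finally, suppose for contradiction that $\mathbf{0}$ is isolated in the subspace $\mathscr{C}_{\mathbb{Z}}^{\mathbf{0}}[n]$ and choose a $\tau$-open $W$ with $W\cap\mathscr{C}_{\mathbb{Z}}^{\mathbf{0}}[n]=\{\mathbf{0}\}$. Then $\varphi^{-1}(W)$ is a $\tau$-open neighbourhood of $\mathbf{0}$, so by the first step it contains some $y\neq\mathbf{0}$; but $\varphi(y)\in W\cap\mathscr{C}_{\mathbb{Z}}^{\mathbf{0}}[n]=\{\mathbf{0}\}$ forces $y=\mathbf{0}$, a contradiction. Hence $\mathbf{0}$ is a non-isolated point of $\mathscr{C}_{\mathbb{Z}}^{\mathbf{0}}[n]$, and this subsemigroup is non-discrete. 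The soft parts of this argument (the reduction to the zero and the final contradiction) are straightforward point-set topology once the cited rigidity theorem for $\mathscr{C}_{\mathbb{Z}}$ is available; I expect the only genuinely computational point to be the explicit verification in the middle step that the sandwich by $e=(n,n)$ lands exactly in $\mathscr{C}_{\mathbb{Z}}^{\mathbf{0}}[n]$, together with the routine but necessary check that restricting a separately continuous multiplication to the clopen subsemigroup $\mathscr{C}_{\mathbb{Z}}$ again yields a semitopological semigroup.
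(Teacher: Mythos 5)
Your proof is correct and follows essentially the same route as the paper's: the paper also reduces to the non-isolatedness of $\mathbf{0}$ (via the theorem of Fihel and Gutik that all non-zero points are isolated) and then derives a contradiction by sandwiching a neighbourhood of $\mathbf{0}$ between $(n,n)$ on both sides, using separate continuity, and observing that the result lands in $\mathscr{C}_{\mathbb{Z}}[n]$. Your packaging of this sandwich as a continuous retraction $\varphi(x)=(n,n)x(n,n)$ onto $\mathscr{C}_{\mathbb{Z}}^{\mathbf{0}}[n]$ with $\varphi^{-1}(\mathbf{0})=\{\mathbf{0}\}$ is just a cleaner phrasing of the identical computation.
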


\begin{proof}
First we observe that by Theorem~1 from \cite{Fihel-Gutik-2011} all non-zero elements of the semigroup $\mathscr{C}_{\mathbb{Z}}^{\mathbf{0}}$ are isolated points in $\left(\mathscr{C}_{\mathbb{Z}}^{\mathbf{0}},\tau\right)$.

Suppose to the contrary that there exist a non-discrete Hausdorff shift-continuous topology $\tau$ on $\mathscr{C}_{\mathbb{Z}}^{\mathbf{0}}$ and an integer $n$ such that $\mathscr{C}_{\mathbb{Z}}^{\mathbf{0}}[n]$ is a discrete subsemigroup of $\left(\mathscr{C}_{\mathbb{Z}}^{\mathbf{0}},\tau\right)$. Fix an arbitrary open neighbourhood $U(\mathbf{0})$ of zero $\mathbf{0}$ in $\left(\mathscr{C}_{\mathbb{Z}}^{\mathbf{0}},\tau\right)$ such that $U(\mathbf{0})\cap \mathscr{C}_{\mathbb{Z}}^{\mathbf{0}}[n]=\{\mathbf{0}\}$. Then the separate continuity of the semigroup operation in $\left(\mathscr{C}_{\mathbb{Z}}^{\mathbf{0}},\tau\right)$ implies that there exists an open neighbourhood $V(\mathbf{0})\subseteq U(\mathbf{0})$ of zero $\mathbf{0}$ in $\left(\mathscr{C}_{\mathbb{Z}}^{\mathbf{0}},\tau\right)$ such  that $(n,n)\cdot V(\mathbf{0})\cdot(n,n)\subseteq U(\mathbf{0})$. Our assumption implies that every open neighbourhood $W(\mathbf{0})\subseteq U(\mathbf{0})$ of zero $\mathbf{0}$ in $\left(\mathscr{C}_{\mathbb{Z}}^{\mathbf{0}},\tau\right)$ contains infinitely many points $(x,y)$ with the property: $x\leqslant n$ or $y\leqslant n$. Then for any non-zero $(x,y)\in V(\mathbf{0})$ by formula \eqref{eq-0.1} we have that
\begin{equation*}
  (n,n)\cdot (x,y)\cdot(n,n) =(n,n-x+y)\cdot(n,n)=
\left\{
  \begin{array}{ll}
    (n+x-y,n), & \hbox{if~} y\leqslant x; \\
    (n,n-x+y), & \hbox{if~} y\geqslant x,
  \end{array}
\right.
\end{equation*}
and hence
$
(n,n)\cdot V(\mathbf{0})\cdot(n,n)\cap \mathscr{C}_{\mathbb{Z}}[n]\neq\varnothing
$
which contradicts the assumption $U(\mathbf{0})\cap \mathscr{C}_{\mathbb{Z}}^{\mathbf{0}}[n]=\{\mathbf{0}\}$. The obtained contradiction implies the statement of the lemma.
\end{proof}

For an arbitrary non-zero element $(a,b)\in \mathscr{C}_{\mathbb{Z}}^{\mathbf{0}}$ we denote
\begin{equation*}
  {\uparrow}_{\preccurlyeq}(a,b)=\left\{(x,y)\in\mathscr{C}_{\mathbb{Z}}\colon (a,b)\preccurlyeq(x,y) \right\},
\end{equation*}
where $\preccurlyeq$ is the natural partial order on $\mathscr{C}_{\mathbb{Z}}^{\mathbf{0}}$. It is obvious that
\begin{equation*}
  {\uparrow}_{\preccurlyeq}(a,b)=\left\{(x,y)\in\mathscr{C}_{\mathbb{Z}} \colon a-b=x-y \hbox{~and~} x\leqslant a \hbox{~in~} (\mathbb{Z},\leqslant) \right\}.
\end{equation*}

\begin{lemma}\label{lemma-2.4}
Let $(a,b),(c,d),(e,f)\in \mathscr{C}_{\mathbb{Z}}$ be such that $(a,b)\cdot(c,d)=(e,f)$. Then the following statements hold.
\begin{itemize}
  \item[$(i)$] If $b\leqslant c$ then $(x,y)\cdot(c,d)=(e,f)$ for any $(x,y)\in{\uparrow}_{\preccurlyeq}(a,b)$, and moreover there exists a minimal element $(\hat{a},\hat{b})\preccurlyeq(a,b)$ in $\mathscr{C}_{\mathbb{Z}}$ such that $(\hat{a},\hat{b})\cdot(c,d)=(e,f)$. Also, there exist no other elements $(x,y)\in\mathscr{C}_{\mathbb{Z}}$ with the property $(x,y)\cdot(c,d)=(e,f)$.

  \item[$(ii)$] If $b\geqslant c$ then $(a,b)\cdot(x,y)=(e,f)$ for any $(x,y)\in{\uparrow}_{\preccurlyeq}(c,d)$, and moreover there exists a minimal element $(\hat{c},\hat{d})\preccurlyeq(c,d)$ in $\mathscr{C}_{\mathbb{Z}}$ such that $(a,b)\cdot(\hat{c},\hat{d})=(e,f)$. Also, there exist no other elements $(x,y)\in\mathscr{C}_{\mathbb{Z}}$ with the property $(a,b)\cdot(x,y)=(e,f)$.
\end{itemize}
\end{lemma}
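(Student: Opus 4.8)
The plan is to work directly from the multiplication formula \eqref{eq-0.1}, treating the two statements $(i)$ and $(ii)$ in parallel since they are mirror images under the natural involution $(a,b)\mapsto(b,a)$; it suffices to prove $(i)$ in full and then remark that $(ii)$ follows by symmetry. So assume $(a,b)\cdot(c,d)=(e,f)$ with $b\leqslant c$. By \eqref{eq-0.1} this means $(e,f)=(a-b+c,d)$ (the cases $b<c$ and $b=c$ being subsumed in the single formula $a-b+c$, $d$). Now take any $(x,y)\in{\uparrow}_{\preccurlyeq}(a,b)$; by the explicit description of ${\uparrow}_{\preccurlyeq}$ recorded just before the lemma, this means $x-y=a-b$ and $x\leqslant a$. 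The first step is to observe that $y=x-(a-b)\leqslant a-(a-b)=b\leqslant c$, so the first branch of \eqref{eq-0.1} again applies to $(x,y)\cdot(c,d)$, giving $(x,y)\cdot(c,d)=(x-y+c,d)=(a-b+c,d)=(e,f)$. This proves the first assertion of $(i)$.

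Next I would identify the minimal element. Since $(\hat a,\hat b)\preccurlyeq(a,b)$ forces $\hat a-\hat b=a-b$, every element $\preccurlyeq(a,b)$ has the form $(a-k,b-k)$ for $k\geqslant 0$, and such an element lies in $\mathscr{C}_{\mathbb{Z}}$ for every $k\geqslant 0$; in particular there is no smallest element of $\mathscr{C}_{\mathbb{Z}}$ below $(a,b)$ in an absolute sense — so the word ``minimal'' here must be read relative to the property $(x,y)\cdot(c,d)=(e,f)$, i.e. I want the $\preccurlyeq$-least $(\hat a,\hat b)\preccurlyeq(a,b)$ still satisfying that equation. Running the computation above in reverse: for $(a-k,b-k)$ the product with $(c,d)$ equals $(e,f)$ precisely as long as $b-k\leqslant c$, which holds for all $k\geqslant 0$ once $b\leqslant c$; but we also need to check the elements $(x,y)$ with $x-y=a-b$ that are \emph{not} below $(a,b)$, i.e. $x>a$. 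For these the claim is that the product is \emph{not} $(e,f)$: if $x>a$ then $y=x-(a-b)>b$, and one splits according to whether $y\leqslant c$ or $y>c$. If $y\leqslant c$ we still get $(x,y)\cdot(c,d)=(x-y+c,d)=(a-b+c,d)=(e,f)$ — so in fact the set of solutions is $\{(x,y):x-y=a-b,\ y\leqslant c\}$, which does have a $\preccurlyeq$-minimum only if we intersect with ${\uparrow}_{\preccurlyeq}(a,b)$, and then the minimum is $(\hat a,\hat b)=(a-b+c,\,c)$ when $b\le c$... wait, that is an \emph{upper} bound in the chain, not a lower one. The correct reading, matching the statement, is that among the $(x,y)$ of the form $(a-k,b-k)$ there is no lower bound, so ``minimal'' cannot mean least; rather the lemma asserts existence of \emph{a} minimal solution, and since the solution set restricted to the $\mathscr D$-class line $\{x-y=a-b\}$ is the down-set $\{y\leqslant c\}$ which is \emph{not} bounded below, I must instead interpret the claim as: the solutions form exactly ${\uparrow}_{\preccurlyeq}(\hat a,\hat b)$ for a suitable $(\hat a,\hat b)$. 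Concretely, set $(\hat a,\hat b)=(a-b+c,c)$; then $\hat a-\hat b=a-b$ and the solution set $\{(x,y):x-y=a-b,\ y\leqslant c\}$ equals ${\uparrow}_{\preccurlyeq}(\hat a,\hat b)$, with $(\hat a,\hat b)$ its (unique) $\preccurlyeq$-maximum, hence \emph{a} (in fact the) minimal generator of this filter. This is the content I would write up, phrasing it as: the unique minimal element (with respect to the property) is $(\hat a,\hat b)=(a-b+c,c)$, it satisfies $(\hat a,\hat b)\preccurlyeq(a,b)$ since $c\geqslant b$, and every solution lies in ${\uparrow}_{\preccurlyeq}(\hat a,\hat b)$.

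The main obstacle I anticipate is precisely this bookkeeping about what ``minimal'' means and pinning down the exact solution set: one has to verify that if $x-y=a-b$ but $y>c$ then the \emph{other} branch of \eqref{eq-0.1} gives $(x,y)\cdot(c,d)=(x,d+y-c)$, whose second coordinate $d+y-c>d=f$, so indeed $(x,y)\cdot(c,d)\neq(e,f)$; and that if $x-y\neq a-b$ the product lands in a different $\mathscr{R}$- or $\mathscr{L}$-class altogether and so cannot equal $(e,f)$. Once these three cases (namely $x-y\neq a-b$; $x-y=a-b$ with $y\leqslant c$; $x-y=a-b$ with $y>c$) are dispatched, the ``no other elements'' clause and the description of the minimal element both drop out. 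Statement $(ii)$ is then obtained verbatim by applying $(i)$ to the opposite semigroup, noting that $\mathscr{C}_{\mathbb{Z}}$ is anti-isomorphic to itself via $(a,b)\mapsto(b,a)$, which also reverses $\preccurlyeq$ in the expected way, so I would simply say ``the proof of $(ii)$ is dual''.
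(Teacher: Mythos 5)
Your proposal reaches the correct conclusions, but by a more computational route than the paper: you work everything out of formula \eqref{eq-0.1}, whereas the paper derives the first assertion of $(i)$ from the inverse-semigroup identity $(x,y)\cdot(b,b)=(x,y)\cdot(a,b)^{-1}\cdot(a,b)=(a,b)$ for $(x,y)\in{\uparrow}_{\preccurlyeq}(a,b)$ (Lawson, Lemma~1.4.6(5)) together with $(b,b)\cdot(c,d)=(c,d)$, and then simply exhibits $(\hat a,\hat b)=(a-b+c,c)$. Both arguments land on the same minimal element, and your explicit three-case analysis ($x-y\neq a-b$; $x-y=a-b$ with $y\leqslant c$; $x-y=a-b$ with $y>c$) actually spells out the ``no other elements'' clause in more detail than the paper, which only cites Proposition~2.1 of \cite{Fihel-Gutik-2011} and formula \eqref{eq-0.1}. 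What you must repair before writing this up is the orientation of $\preccurlyeq$, which you invert several times in the middle of the argument: by the description recorded just before the lemma, $(x,y)\preccurlyeq(a,b)$ means $x-y=a-b$ and $x\geqslant a$, so the elements below $(a,b)$ are $(a+k,b+k)$ with $k\geqslant 0$, not $(a-k,b-k)$; consequently the solution set $\left\{(x,y)\colon x-y=a-b,\ y\leqslant c\right\}={\uparrow}_{\preccurlyeq}(a-b+c,c)$ is bounded below in $\preccurlyeq$ and has $(a-b+c,c)$ as its $\preccurlyeq$-\emph{minimum}, not its maximum, so the word ``minimal'' needs no reinterpretation and the detour in your second paragraph disappears. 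Likewise, the inversion $(a,b)\mapsto(b,a)$ \emph{preserves} the natural partial order (as in any inverse semigroup) rather than reversing it; this preservation is precisely what makes your duality argument for $(ii)$ return up-sets rather than down-sets, so the phrase ``reverses $\preccurlyeq$ in the expected way'' is wrong and, if taken literally, would break the deduction of $(ii)$. None of these slips changes the final answer, but as drafted the middle of the argument contradicts its own conclusion and should be rewritten with the paper's convention fixed from the start.
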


\begin{proof}
$(i)$ Since $b\leqslant c$ the semigroup operation of $\mathscr{C}_{\mathbb{Z}}$ implies that $(b,b)\cdot(c,d)=(c,d)$. Also, if $(a,b)\preccurlyeq(x,y)$ then Lemma~1.4.6(5) from \cite{Lawson-1998} implies that
$$
(x,y)\cdot(b,b)=(x,y)\cdot(a,b)^{-1}\cdot(a,b)=(a,b),
$$
and hence we have that
\begin{equation*}
  (x,y)\cdot(c,d)=(x,y)\cdot((b,b)\cdot(c,d))=((x,y)\cdot(b,b))\cdot(c,d)=(a,b)\cdot(c,d)=(e,f).
\end{equation*}

We put $(\hat{a},\hat{b})=(a-b+c,c)$. Then $(\hat{a},\hat{b})\preccurlyeq(a,b)$ and formula \eqref{eq-0.1} implies that the element $(\hat{a},\hat{b})$ is required.

The last statement follows from Proposition~2,1 from \cite{Fihel-Gutik-2011} and formula \eqref{eq-0.1}.

\smallskip

The proof of statement $(ii)$ is similar.
\end{proof}

\begin{lemma}\label{lemma-2.5}
Let $\tau$ be a non-discrete Hausdorff shift-continuous topology on $\mathscr{C}_{\mathbb{Z}}^{\mathbf{0}}$. Then the natural partial order $\preccurlyeq$ is closed on $\left(\mathscr{C}_{\mathbb{Z}}^{\mathbf{0}},\tau\right)$ and ${\uparrow}_{\preccurlyeq}(a,b)$ is an open-and-closed subset of $\left(\mathscr{C}_{\mathbb{Z}}^{\mathbf{0}},\tau\right)$ for any non-zero element $(a,b)$ of $\mathscr{C}_{\mathbb{Z}}^{\mathbf{0}}$.
\end{lemma}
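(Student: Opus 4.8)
The plan is to show first that $\preccurlyeq$ is a closed subset of $\mathscr{C}_{\mathbb{Z}}^{\mathbf{0}}\times\mathscr{C}_{\mathbb{Z}}^{\mathbf{0}}$, and then to deduce the open-and-closedness of each ${\uparrow}_{\preccurlyeq}(a,b)$. Recall from the proof of Lemma~\ref{lemma-2.3} (via Theorem~1 of \cite{Fihel-Gutik-2011}) that every non-zero element of $\left(\mathscr{C}_{\mathbb{Z}}^{\mathbf{0}},\tau\right)$ is isolated, so the only accumulation point is $\mathbf{0}$. To check that $\preccurlyeq$ is closed, take a pair $\big((a,b),(c,d)\big)\notin{\preccurlyeq}$; we must separate it from $\preccurlyeq$ by a box-neighbourhood. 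If both coordinates are non-zero this is trivial since $\{(a,b)\}$ and $\{(c,d)\}$ are open and $(a,b)\not\preccurlyeq(c,d)$. The case $(a,b)=\mathbf{0}$, $(c,d)\neq\mathbf{0}$ is also immediate: $\mathbf{0}\not\preccurlyeq(c,d)$ for every non-zero $(c,d)$ (since $\mathbf{0}$ is the zero, not an idempotent below $(c,d)$ in $\mathscr{C}_{\mathbb{Z}}$), and $\{(c,d)\}$ is open, while we may take any neighbourhood of $\mathbf{0}$; no element of $\mathscr{C}_{\mathbb{Z}}^{\mathbf0}$ is $\preccurlyeq(c,d)$ except those in ${\uparrow}_{\preccurlyeq}(c,d)^{-1}$-type set — the point is that pairs $(x,(c,d))$ with $x\preccurlyeq(c,d)$ form a fixed finite set not containing $\mathbf0$. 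The genuinely delicate case is $(c,d)=\mathbf{0}$ with $(a,b)\neq\mathbf{0}$: here $(a,b)\not\preccurlyeq\mathbf{0}$ (nothing non-zero lies below $\mathbf{0}$), $\{(a,b)\}$ is open, and one needs an open neighbourhood $U(\mathbf{0})$ of $\mathbf{0}$ such that $(a,b)\not\preccurlyeq u$ for all $u\in U(\mathbf{0})$; but ${\downarrow}_{\preccurlyeq}(a,b)\cap{\uparrow}_{\preccurlyeq}(a,b)$-reasoning shows $(a,b)\preccurlyeq u$ forces $u$ to lie in the set ${\uparrow}_{\preccurlyeq}(a,b)$, which consists of non-zero elements only, hence misses $\mathbf0$; thus $\{(a,b)\}\times U(\mathbf0)$ works for any $U(\mathbf0)$ not meeting that (at most countable, but that is not even needed) set. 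Hence $\preccurlyeq$ is closed.

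Next I would prove that ${\uparrow}_{\preccurlyeq}(a,b)$ is closed. Since $(a,b)$ is a fixed non-zero element, the set ${\downarrow}_{\preccurlyeq}(a,b)=\{(x,y):(x,y)\preccurlyeq(a,b)\}$ is exactly $\{(x,y): a-b=x-y,\ x\leqslant a\}$... rather, the set I want is ${\uparrow}_{\preccurlyeq}(a,b)=\{(x,y): a-b=x-y,\ x\leqslant a\}$ as displayed in the excerpt. To see it is closed, note its complement in $\mathscr{C}_{\mathbb{Z}}^{\mathbf{0}}$ is $\big(\mathscr{C}_{\mathbb{Z}}\setminus{\uparrow}_{\preccurlyeq}(a,b)\big)\cup\{\mathbf{0}\}$; every point of the first part is isolated, so it suffices to find an open neighbourhood of $\mathbf{0}$ disjoint from ${\uparrow}_{\preccurlyeq}(a,b)$. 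Here is where I use the idempotent reformulation: the section $\{x: x\preccurlyeq(a,b)\text{ or }(a,b)\preccurlyeq x\}$ along the $\preccurlyeq$-line through $(a,b)$ — in fact just ${\uparrow}_{\preccurlyeq}(a,b)$ itself — can be produced as a one-sided translate. Concretely, for $(x,y)\in\mathscr{C}_{\mathbb Z}$ one has $(x,y)\cdot(b,b)=(a,b)$ iff $(x,y)\in{\uparrow}_{\preccurlyeq}(a,b)$ (this is essentially Lemma~\ref{lemma-2.4}$(i)$ together with the fact, from Lemma~1.4.6(5) of \cite{Lawson-1998}, that above-$(a,b)$ elements are precisely those collapsing to $(a,b)$ under right multiplication by $(a,b)^{-1}(a,b)=(b,b)$). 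So ${\uparrow}_{\preccurlyeq}(a,b)=\rho_{(b,b)}^{-1}(\{(a,b)\})\cap\mathscr{C}_{\mathbb Z}$, where $\rho_{(b,b)}$ is right translation by $(b,b)$. But $\rho_{(b,b)}(\mathbf0)=\mathbf0\neq(a,b)$ and $\{(a,b)\}$ is open, so by continuity of $\rho_{(b,b)}$ there is an open $U(\mathbf0)$ with $U(\mathbf0)\cdot(b,b)\subseteq\mathscr{C}_{\mathbb{Z}}^{\mathbf0}\setminus\{(a,b)\}$, whence $U(\mathbf0)\cap{\uparrow}_{\preccurlyeq}(a,b)=\varnothing$. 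Thus ${\uparrow}_{\preccurlyeq}(a,b)$ is closed.

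Finally, openness of ${\uparrow}_{\preccurlyeq}(a,b)$: the set consists only of non-zero, hence isolated, points of $\left(\mathscr{C}_{\mathbb{Z}}^{\mathbf{0}},\tau\right)$, so it is automatically open. That finishes the lemma. The main obstacle, as flagged above, is the closedness of $\preccurlyeq$ near pairs of the form $\big((a,b),\mathbf{0}\big)$ and $\big(\mathbf0,\mathbf0\big)$: one must rule out $(a,b)\preccurlyeq u$ for $u$ near $\mathbf{0}$, and the clean way to do that is exactly the translation trick — $(a,b)\preccurlyeq u$ forces $u\in{\uparrow}_{\preccurlyeq}(a,b)=\rho_{(b,b)}^{-1}((a,b))$, a set whose closure we have just controlled by continuity of a single shift. (For the pair $(\mathbf0,\mathbf0)$ note $\mathbf0\preccurlyeq\mathbf0$, so that pair lies \emph{in} $\preccurlyeq$ and needs no separation.) So the whole argument reduces to separate continuity plus the isolated-point observation, with no appeal to local compactness — consistent with the statement being about arbitrary non-discrete shift-continuous $\tau$.
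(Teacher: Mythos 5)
Your proof is correct in substance and follows essentially the same route as the paper: both arguments rest on the observation that all non-zero elements are isolated, and both obtain the open-and-closedness of ${\uparrow}_{\preccurlyeq}(a,b)$ by exhibiting it as the preimage of the clopen singleton $\{(a,b)\}$ under a single continuous translation that sends $\mathbf{0}$ to $\mathbf{0}$. The only cosmetic difference is that the paper uses the left shift, via ${\uparrow}_{\preccurlyeq}(a,b)=\{(x,y)\colon (a,a)\cdot(x,y)=(a,b)\}$, while you use the right shift by $(b,b)$; both computations are valid and interchangeable.

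One claim in your case analysis is false, although harmlessly so. You assert that $\mathbf{0}\not\preccurlyeq(c,d)$ for non-zero $(c,d)$. In fact $\mathbf{0}=\mathbf{0}\cdot(c,d)$ with $\mathbf{0}$ an idempotent, so $\mathbf{0}\preccurlyeq(c,d)$ for \emph{every} $(c,d)$: the adjoined zero is the minimum of the natural partial order, and the paper's own proof uses precisely this fact. Consequently the case ``first coordinate $\mathbf{0}$, second coordinate non-zero'' is vacuous (no such pair lies outside $\preccurlyeq$), and the separation you sketch there --- a box $U(\mathbf{0})\times\{(c,d)\}$ missing $\preccurlyeq$ --- is actually impossible, since the pair $\big(\mathbf{0},(c,d)\big)$ itself belongs to $\preccurlyeq$. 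Relatedly, the set $\{x\colon x\preccurlyeq(c,d)\}$ is infinite and does contain $\mathbf{0}$, contrary to your parenthetical remark. None of this damages the proof: every pair genuinely outside $\preccurlyeq$ has non-zero first coordinate, so your remaining cases (both coordinates non-zero; second coordinate $\mathbf{0}$, handled by the translation trick) cover everything, and the lemma is established.
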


\begin{proof}
By Theorem~1 of \cite{Fihel-Gutik-2011} all non-zero elements of the semigroup $\mathscr{C}_{\mathbb{Z}}^{\mathbf{0}}$ are isolated points in $\left(\mathscr{C}_{\mathbb{Z}}^{\mathbf{0}},\tau\right)$. Since $\mathbf{0}\preccurlyeq(a,b)$ for any $(a,b)\in \mathscr{C}_{\mathbb{Z}}^{\mathbf{0}}$, the above implies the first statement of the lemma.

The definition of the natural partial order $\preccurlyeq$ on $\mathscr{C}_{\mathbb{Z}}^{\mathbf{0}}$ and the separate continuity of the semigroup operation on $\left(\mathscr{C}_{\mathbb{Z}}^{\mathbf{0}},\tau\right)$ imply the second statement, because
\begin{equation*}
{\uparrow}_{\preccurlyeq}(a,b)=\big\{(x,y)\in\mathscr{C}_{\mathbb{Z}}^{\mathbf{0}}\colon (a,a)\cdot (x,y)=(a,b)\big\}.
\end{equation*}
\end{proof}

\begin{proposition}\label{proposition-2.6}
Let the semigroup $\mathscr{C}_{\mathbb{Z}}^{\mathbf{0}}$ admits a non-discrete Hausdorff locally compact shift-continuous topology $\tau$. Then the following statements hold:
\begin{itemize}
  \item[$(i)$] for any open neighbourhood $U(\mathbf{0})$ of zero there exists a compact-and-open neighbourhood $V(\mathbf{0})\subseteq U(\mathbf{0})$ of $\mathbf{0}$ in $\left(\mathscr{C}_{\mathbb{Z}}^{\mathbf{0}},\tau\right)$;
  \item[$(ii)$] the set ${\uparrow}_{\preccurlyeq}(a,b)\cap U(\mathbf{0})$ is finite for any compact-and-open neighbourhood $V(\mathbf{0})\subseteq U(\mathbf{0})$ of the zero $\mathbf{0}$ in $\left(\mathscr{C}_{\mathbb{Z}}^{\mathbf{0}},\tau\right)$ and any non-zero element $(a,b)$ of $\mathscr{C}_{\mathbb{Z}}^{\mathbf{0}}$;
  \item[$(iii)$] for any open neighbourhood $U(\mathbf{0})$ of zero in $\left(\mathscr{C}_{\mathbb{Z}}^{\mathbf{0}},\tau\right)$ and any integer $n$ the set $U(\mathbf{0})\setminus \mathscr{C}_{\mathbb{Z}}^{\mathbf{0}}[n]$ is finite.
\end{itemize}
\end{proposition}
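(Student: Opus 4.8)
The plan is to treat the three items separately, using throughout the fact (Theorem~1 of \cite{Fihel-Gutik-2011}, already quoted in the proof of Lemma~\ref{lemma-2.3}) that every non-zero element of $\mathscr{C}_{\mathbb{Z}}^{\mathbf{0}}$ is an isolated point of $\left(\mathscr{C}_{\mathbb{Z}}^{\mathbf{0}},\tau\right)$, so that $\mathbf{0}$ is its only non-isolated point. The one elementary observation that does most of the work is that a compact subset of $\left(\mathscr{C}_{\mathbb{Z}}^{\mathbf{0}},\tau\right)$ which does not contain $\mathbf{0}$ is a compact discrete space and hence finite.

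For $(i)$ I would argue purely topologically. Given an open neighbourhood $U(\mathbf{0})$, local compactness together with the Hausdorff property supplies a compact neighbourhood $K$ of $\mathbf{0}$ with $K\subseteq U(\mathbf{0})$; being compact in a Hausdorff space, $K$ is closed, so its frontier $K\setminus\operatorname{int}(K)$ is closed in $K$, hence compact, and it avoids $\mathbf{0}$, so it is a finite set $F$. Then $V(\mathbf{0}):=\operatorname{int}(K)=K\setminus F$ is open (it is the interior of $K$) and compact (a compact set with finitely many isolated points removed stays compact), it contains $\mathbf{0}$, and it lies inside $U(\mathbf{0})$. For $(ii)$, let $V(\mathbf{0})$ be such a compact-and-open neighbourhood and $(a,b)$ a non-zero element. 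By Lemma~\ref{lemma-2.5} the set ${\uparrow}_{\preccurlyeq}(a,b)$ is closed in $\left(\mathscr{C}_{\mathbb{Z}}^{\mathbf{0}},\tau\right)$, so ${\uparrow}_{\preccurlyeq}(a,b)\cap V(\mathbf{0})$ is a compact subset of $\mathscr{C}_{\mathbb{Z}}$ not containing $\mathbf{0}$, and therefore finite.

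For $(iii)$ the crucial step is to upgrade $\mathscr{C}_{\mathbb{Z}}^{\mathbf{0}}[n]$ from an algebraic subsemigroup to a compact one. Since $\mathscr{C}_{\mathbb{Z}}^{\mathbf{0}}\setminus\mathscr{C}_{\mathbb{Z}}^{\mathbf{0}}[n]=\{(x,y)\colon x<n \text{ or } y<n\}$ consists of isolated points, it is open, so $\mathscr{C}_{\mathbb{Z}}^{\mathbf{0}}[n]$ is a \emph{closed} subspace of $\left(\mathscr{C}_{\mathbb{Z}}^{\mathbf{0}},\tau\right)$ and hence locally compact; by Corollary~\ref{corollary-2.2} it is a semitopological subsemigroup isomorphic to $\mathscr{C}^{\mathbf{0}}$, and by Lemma~\ref{lemma-2.3} it is non-discrete. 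The compact-or-discrete dichotomy for $\mathscr{C}^{\mathbf{0}}$ proved in \cite{Gutik-2015} then forces $\mathscr{C}_{\mathbb{Z}}^{\mathbf{0}}[n]$ to be compact. Consequently, for an open neighbourhood $U(\mathbf{0})$ the family $\{U(\mathbf{0})\}\cup\big\{\{(a,b)\}\colon(a,b)\in\mathscr{C}_{\mathbb{Z}}[n]\setminus U(\mathbf{0})\big\}$ is an open cover of the compact space $\mathscr{C}_{\mathbb{Z}}^{\mathbf{0}}[n]$; extracting a finite subcover shows that $\mathscr{C}_{\mathbb{Z}}^{\mathbf{0}}[n]\setminus U(\mathbf{0})$ is finite. (I read the set difference in the displayed statement of $(iii)$ this way round: the set $U(\mathbf{0})\setminus\mathscr{C}_{\mathbb{Z}}^{\mathbf{0}}[n]$ is already infinite when $U(\mathbf{0})=\mathscr{C}_{\mathbb{Z}}^{\mathbf{0}}$.)

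I expect $(i)$ and $(ii)$ to be essentially routine once the isolated-point fact and Lemma~\ref{lemma-2.5} are in hand; the only place calling for an idea is $(iii)$, namely recognising that $\mathscr{C}_{\mathbb{Z}}^{\mathbf{0}}[n]$ is \emph{closed}, so that local compactness descends to it, and then appealing to the external dichotomy theorem to convert ``locally compact and non-discrete'' into ``compact''.
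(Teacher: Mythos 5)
Your proof is correct and follows essentially the same route as the paper's: $(i)$ and $(ii)$ are the routine consequences of local compactness, Lemma~\ref{lemma-2.5} and the fact that all non-zero elements are isolated, and for $(iii)$ you establish, exactly as the paper does, that $\mathscr{C}_{\mathbb{Z}}^{\mathbf{0}}[n]$ is closed (the paper uses that it is the retract $(n,n)\cdot\mathscr{C}_{\mathbb{Z}}^{\mathbf{0}}\cdot(n,n)$, you use that its complement consists of isolated points), hence locally compact, non-discrete by Lemma~\ref{lemma-2.3}, and therefore compact by the dichotomy of \cite{Gutik-2015}. Your reading of $(iii)$ as ``$\mathscr{C}_{\mathbb{Z}}^{\mathbf{0}}[n]\setminus U(\mathbf{0})$ is finite'' is the right one---the displayed difference is a typo, since $U(\mathbf{0})=\mathscr{C}_{\mathbb{Z}}^{\mathbf{0}}$ already refutes the literal statement---and it is precisely what the paper's own proof establishes.
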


\begin{proof}
Statement $(i)$ follows from Theorem~1 of \cite{Fihel-Gutik-2011} and the local compactness of the space $\left(\mathscr{C}_{\mathbb{Z}}^{\mathbf{0}},\tau\right)$.

Statement $(ii)$ follows from Lemma~\ref{lemma-2.5} and Theorem~1 of \cite{Fihel-Gutik-2011}.

$(iii)$ It is obvious that $\mathscr{C}_{\mathbb{Z}}^{\mathbf{0}}[n]=(n,n)\cdot\mathscr{C}_{\mathbb{Z}}^{\mathbf{0}}\cdot(n,n)$ for any integer $n$. This implies that $\mathscr{C}_{\mathbb{Z}}^{\mathbf{0}}[n]$ is a closed subset of $\left(\mathscr{C}_{\mathbb{Z}}^{\mathbf{0}},\tau\right)$ because $\mathscr{C}_{\mathbb{Z}}^{\mathbf{0}}[n]$ is a retract of the space $\left(\mathscr{C}_{\mathbb{Z}}^{\mathbf{0}},\tau\right)$, and hence by Corollary~3.3.10 from \cite{Engelking-1989} it is locally compact.
Since the topology $\tau$ is non-discrete, Lemma~\ref{lemma-2.3} and Theorem~1 from \cite{Gutik-2015} imply that $\mathscr{C}_{\mathbb{Z}}^{\mathbf{0}}[n]$ is a compact subspace of $\left(\mathscr{C}_{\mathbb{Z}}^{\mathbf{0}},\tau\right)$. Finally, we apply Theorem~1 from \cite{Fihel-Gutik-2011}.
\end{proof}

Next we shall construct an example a non-discrete Hausdorff locally compact shift-continuous topology on the semigroup $\mathscr{C}_{\mathbb{Z}}^{\mathbf{0}}$ which is neither compact nor discrete.

\begin{example}\label{example-2.7}
Let $\left\{x_n\right\}_{n\in\mathbb{N}}$ and $\left\{y_n\right\}_{n\in\mathbb{N}}$ be two increasing sequences of positive integers with the following properties: $x_1, y_1>1$ and
\begin{equation*}
  x_n+1<x_{n+1} \qquad \hbox{and} \qquad 2<y_n+1<y_{n+1}, \qquad \hbox{~for any~} n\in\mathbb{N}.
\end{equation*}
We denote
\begin{equation*}
A_0={\uparrow}_{\preccurlyeq}(0,0)\cup \bigcup_{i=1}^{x_1-1}{\uparrow}_{\preccurlyeq}(0,-i)\cup \bigcup_{j=1}^{y_1-1}{\uparrow}_{\preccurlyeq}(-j,0)
\end{equation*}
and
\begin{align*}
  A_n^d=\bigcup_{i=x_n}^{x_{n+1}-1}{\uparrow}_{\preccurlyeq}(-x_n,-i);  \qquad
  A_n^l=\bigcup_{j=y_n}^{y_{n+1}-1}{\uparrow}_{\preccurlyeq}(-j,-y_n), \qquad \hbox{for any positive integer} \; n.
\end{align*}

Next, we put
$$
  D=A_0\cup \displaystyle\bigcup_{i\in\mathbb{N}} \left(A_i^d\cup A_i^l\right).
$$
For finitely many $(a_1,b_1),\ldots,(a_k,b_k)\in \mathscr{C}_{\mathbb{Z}}$ we denote
\begin{equation*}
  U_{(a_1,b_1),\ldots,(a_k,b_k)}=\mathscr{C}_{\mathbb{Z}}^{\mathbf{0}}\setminus\left(D\cup{\uparrow}_{\preccurlyeq}(a_1,b_1)\cup\cdots\cup {\uparrow}_{\preccurlyeq}(a_k,b_k) \right).
\end{equation*}

We define a topology $\tau_{\footnotesize{\{x_n\}}}^{\footnotesize{\{y_n\}}}$ on the semigroup $\mathscr{C}_{\mathbb{Z}}^{\mathbf{0}}$ in the following way:
\begin{itemize}
  \item[$(i)$] all non-zero elements of $\mathscr{C}_{\mathbb{Z}}^{\mathbf{0}}$ are isolated points;
  \item[$(ii)$] the family
  $
    \mathscr{B}_{\tau_{\footnotesize{\{x_n\}}}^{\footnotesize{\{y_n\}}}}^{\mathbf{0}}=\left\{ U_{(a_1,b_1),\ldots,(a_k,b_k)}\colon (a_1,b_1),\ldots,(a_k,b_k)\in \mathscr{C}_{\mathbb{Z}}, \; k\in\mathbb{N}\right\}
  $
  is the base of the topology $\tau_{\footnotesize{\{x_n\}}}^{\footnotesize{\{y_n\}}}$ at zero $\mathbf{0}$.
\end{itemize}
\end{example}

\begin{proposition}\label{proposition-2.8}
\begin{itemize}
  \item[$(1)$] The set ${\uparrow}_{\preccurlyeq}(a,b)\setminus D$ is finite for any $(a,b)\in \mathscr{C}_{\mathbb{Z}}$.
  \item[$(2)$] $D$ is a compact subset of the space $\Big(\mathscr{C}_{\mathbb{Z}}^{\mathbf{0}},\tau_{\footnotesize{\{x_n\}}}^{\footnotesize{\{y_n\}}}\Big)$.
  \item[$(3)$] The space $\Big(\mathscr{C}_{\mathbb{Z}}^{\mathbf{0}},\tau_{\footnotesize{\{x_n\}}}^{\footnotesize{\{y_n\}}}\Big)$ is locally compact and Hausdorff.
\end{itemize}
\end{proposition}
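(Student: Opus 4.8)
The plan is to establish the three assertions in the order (1), then (2), then (3), since (1) is the combinatorial core and the topological parts follow from it together with clause (i) of the construction, by which every non-zero element is isolated.

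For (1), fix $(a,b)$ and put $d=a-b$; then ${\uparrow}_{\preccurlyeq}(a,b)=\{(a-k,b-k):k\in\mathbb{N}_0\}$ is the whole downward tail of the diagonal $\{(x,y):x-y=d\}$. I would show that a single up-ray contained in $D$ already swallows all but finitely many of these points, distinguishing three ranges of $d$. If $-(y_1-1)\leqslant d\leqslant x_1-1$, then $A_0$ contains the ray of difference $d$ (namely ${\uparrow}_{\preccurlyeq}(0,-d)$ for $d\geqslant 0$, or ${\uparrow}_{\preccurlyeq}(-|d|,0)$ for $d<0$), covering the tail $\{x\leqslant -|d|\}$. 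If $d\geqslant x_1$, I would choose an index $n$ with $x_{n+1}-x_n\geqslant d+1$; then ${\uparrow}_{\preccurlyeq}(-x_n,-x_n-d)\subseteq A_n^d$ equals $\{(x,x-d):x\leqslant -x_n\}$. Symmetrically, if $d\leqslant -y_1$, the ray ${\uparrow}_{\preccurlyeq}(-y_n-|d|,-y_n)\subseteq A_n^l$ (with $y_{n+1}-y_n\geqslant|d|+1$) covers the tail. In every case the diagonal tail below some threshold $-N$ lies in $D$, so ${\uparrow}_{\preccurlyeq}(a,b)\setminus D\subseteq\{(a-k,b-k):a-k>-N\}$ is finite. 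I expect the main obstacle to be exactly this bookkeeping: one must check that every difference $d$ is captured by some block, which hinges on the block-widths $x_{n+1}-x_n$ and $y_{n+1}-y_n$ being unbounded, so that for each $d$ an admissible index $n$ exists.

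For (2), the compactness in question is that of the neighbourhood $K=\mathscr{C}_{\mathbb{Z}}^{\mathbf{0}}\setminus D$ of $\mathbf{0}$: by clause (i) its only non-isolated point is $\mathbf{0}$, and by (1) it meets every up-ray in a finite set, which is precisely the compactness criterion. I would verify it as follows. Given an open cover, choose a member $U_0\ni\mathbf{0}$ and a basic neighbourhood $V=\mathscr{C}_{\mathbb{Z}}^{\mathbf{0}}\setminus\big(D\cup{\uparrow}_{\preccurlyeq}(a_1,b_1)\cup\cdots\cup{\uparrow}_{\preccurlyeq}(a_m,b_m)\big)\subseteq U_0$. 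Then $K\setminus V=K\cap\big({\uparrow}_{\preccurlyeq}(a_1,b_1)\cup\cdots\cup{\uparrow}_{\preccurlyeq}(a_m,b_m)\big)$, which is finite by (1); covering these finitely many isolated points by finitely many further members of the cover yields a finite subcover, so $K$ is compact.

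For (3), Hausdorffness is immediate: two distinct non-zero points are separated by their own singletons, while $\mathbf{0}$ and a non-zero $(a,b)$ are separated by $\{(a,b)\}$ and the basic neighbourhood $\mathscr{C}_{\mathbb{Z}}^{\mathbf{0}}\setminus\big(D\cup{\uparrow}_{\preccurlyeq}(a,b)\big)$, which omits $(a,b)$. For local compactness, each non-zero point is isolated and so has the compact neighbourhood consisting of itself, while $\mathbf{0}$ has the neighbourhood $\mathscr{C}_{\mathbb{Z}}^{\mathbf{0}}\setminus D$, which is open (since $D$, a union of closed up-rays none of which contains $\mathbf{0}$, is closed) and compact by (2). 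Hence $\Big(\mathscr{C}_{\mathbb{Z}}^{\mathbf{0}},\tau_{\footnotesize{\{x_n\}}}^{\footnotesize{\{y_n\}}}\Big)$ is locally compact and Hausdorff.
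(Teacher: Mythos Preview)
Your overall strategy mirrors the paper's: (1) is the combinatorial heart, (2) follows from (1), and (3) from (2). Two points deserve comment.

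\textbf{On part (2).} You are right that the statement as printed cannot be what is meant: $D$ is an infinite set of isolated points, hence certainly not compact. What is needed for (3)---and what ``follows from (1)''---is that the basic neighbourhood $\mathscr{C}_{\mathbb{Z}}^{\mathbf{0}}\setminus D$ of $\mathbf{0}$ is compact, exactly as you argue. The paper's one-line proof ``Statement (2) follows from (1)'' only makes sense under this reading, so your reinterpretation is the correct one.

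\textbf{On part (1).} Your case analysis by the value of $d=a-b$ is the same idea as the paper's, but you have put your finger on a genuine gap that the paper's argument shares. The hypotheses in Example~2.7 only demand $x_{n+1}>x_n+1$, i.e.\ gaps of size at least $2$; they do \emph{not} force the block-widths $x_{n+1}-x_n$ to be unbounded. Take $x_n=2n$: then every ray in $A_0\cup\bigcup_n A_n^d$ has difference $0$ or $1$, the rays in $A_0\cup\bigcup_n A_n^l$ have non-positive difference, and hence ${\uparrow}_{\preccurlyeq}(2,0)\cap D=\varnothing$, so ${\uparrow}_{\preccurlyeq}(2,0)\setminus D$ is infinite and (1) fails. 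The paper's computation in case~(iii), which asserts ${\uparrow}_{\preccurlyeq}(a,b)\setminus D=\{(k,k-a+b):k=-j+1,\ldots,a\}$ with $x_j\leqslant a-b<x_{j+1}$, is therefore incorrect under the stated hypotheses. Your instinct that ``every difference $d$ must be captured by some block'' is exactly the missing condition; once one assumes the gaps $x_{n+1}-x_n$ and $y_{n+1}-y_n$ are unbounded (or, more simply, that $\{x_{n+1}-x_n\}$ and $\{y_{n+1}-y_n\}$ are cofinal in $\mathbb{N}$), your argument for (1) goes through, and then (2) and (3) follow as you wrote.
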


\begin{proof}
$(1)$ The statement is trivial in the case when $(a,b)\in D$, and hence we assume that $(a,b)\notin D$. Thus, we consider the followwing cases.
\begin{itemize}
  \item[$(i)$] If $a=b$ then ${\uparrow}_{\preccurlyeq}(a,b)\setminus D=\left\{(1,1),\ldots,(a,a)\right\}$.

  \item[$(ii)$] Suppose that $a<b$. Then either there exists a positive integer $i\geqslant 1$ such that $y_{i}\leqslant b-a<y_{i+1}$ or $b-a<y_1$. In the first case we have that
      \begin{equation*}
      {\uparrow}_{\preccurlyeq}(a,b)\setminus D=\left\{(-i+1-b+a,-i+1),\ldots,(a,b)\right\}=\bigcup\left\{(k-b+a,k)\colon k=-i+1,\ldots,b\right\}.
      \end{equation*}
      In the second case  we have that $b>0$ and hence
      \begin{equation*}
      {\uparrow}_{\preccurlyeq}(a,b)\setminus D=\left\{(1-b+a,1),\ldots,(a,b)\right\}=\bigcup\left\{(k-b+a,k)\colon k=1,\ldots,b\right\}.
      \end{equation*}

  \item[$(iii)$] Suppose that $a>b$. Then either there exists a positive integer $j\geqslant 1$ such that $x_{j}\leqslant a-b<x_{j+1}$ or $a-b<x_1$. In the first case we have that
      \begin{equation*}
      {\uparrow}_{\preccurlyeq}(a,b)\setminus D=\left\{(-j+1,-j+1-a+b),\ldots,(a,b)\right\}=\bigcup\left\{(k,k-a+b)\colon k=-j+1,\ldots,a\right\}.
      \end{equation*}
      In the second case we have that $a>0$ and hence
      \begin{equation*}
      {\uparrow}_{\preccurlyeq}(a,b)\setminus D=\left\{(1,1-a+b),\ldots,(a,b)\right\}=\bigcup\left\{(k,k-a+b)\colon k=1,\ldots,a\right\}.
      \end{equation*}
\end{itemize}

Statement $(2)$ follows from $(1)$.

Since all non-zero elements of $\mathscr{C}_{\mathbb{Z}}^{\mathbf{0}}$ are isolated points in $\Big(\mathscr{C}_{\mathbb{Z}}^{\mathbf{0}},\tau_{\footnotesize{\{x_n\}}}^{\footnotesize{\{y_n\}}}\Big)$ statement $(3)$ follows from $(2)$.
\end{proof}

For any non-zero element $(a,b)$ of $\mathscr{C}_{\mathbb{Z}}^{\mathbf{0}}$ we denote
\begin{equation*}
  S^{\underline{b}_{\uparrow}}=\left\{(x,y)\in\mathscr{C}_{\mathbb{Z}}\colon y\geqslant b \right\}\cup \left\{\mathbf{0}\right\} \qquad \hbox{and} \qquad  S^{\overrightarrow{{\shortmid}a}}=\left\{(x,y)\in\mathscr{C}_{\mathbb{Z}}\colon x\geqslant a \right\}\cup \left\{\mathbf{0}\right\}.
\end{equation*}

It is obvious that $(a,b)\mathscr{C}_{\mathbb{Z}}^{\mathbf{0}}=S^{\overrightarrow{{\shortmid}a}}$  and $\mathscr{C}_{\mathbb{Z}}^{\mathbf{0}}(a,b)=S^{\underline{b}_{\uparrow}}$ for any non-zero $(a,b)\in \mathscr{C}_{\mathbb{Z}}^{\mathbf{0}}$.

\begin{theorem}\label{theorem-2.9}
$\Big(\mathscr{C}_{\mathbb{Z}}^{\mathbf{0}},\tau_{\footnotesize{\{x_n\}}}^{\footnotesize{\{y_n\}}}\Big)$ is a semitopological semigroup.
\end{theorem}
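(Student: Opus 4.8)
The plan is to verify that both left and right translations in $\Big(\mathscr{C}_{\mathbb{Z}}^{\mathbf{0}},\tau_{\footnotesize{\{x_n\}}}^{\footnotesize{\{y_n\}}}\Big)$ are continuous. Since every non-zero element is isolated, the only place where continuity can fail is at the zero $\mathbf{0}$; and since $\mathbf{0}$ is a two-sided zero, the translations $x\mapsto \mathbf{0}\cdot x$ and $x\mapsto x\cdot\mathbf{0}$ are constant, so what must be checked is: for a fixed non-zero $(a,b)$, the maps $\lambda_{(a,b)}\colon x\mapsto(a,b)\cdot x$ and $\rho_{(a,b)}\colon x\mapsto x\cdot(a,b)$ are continuous at $\mathbf{0}$. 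Concretely, given a basic neighbourhood $U_{(c_1,d_1),\ldots,(c_k,d_k)}$ of $\mathbf{0}$, I must produce a basic neighbourhood $U_{(a'_1,b'_1),\ldots,(a'_m,b'_m)}$ of $\mathbf{0}$ whose image under $\lambda_{(a,b)}$ (resp. $\rho_{(a,b)}$) lies inside $U_{(c_1,d_1),\ldots,(c_k,d_k)}$.

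First I would handle $\rho_{(a,b)}$. The key structural fact, from Lemma~\ref{lemma-2.4}$(i)$ together with the observation that $(a,b)\mathscr{C}_{\mathbb{Z}}^{\mathbf{0}}=S^{\overrightarrow{{\shortmid}a}}$, is that $\rho_{(a,b)}$ maps $\mathscr{C}_{\mathbb{Z}}^{\mathbf{0}}$ onto $S^{\overrightarrow{{\shortmid}a}}$, and the preimage of any single point $(e,f)$ with $e\geqslant a$ is an ``up-set'' ${\uparrow}_{\preccurlyeq}(x_0,y_0)$ for a suitable minimal element $(x_0,y_0)$; in particular it is an infinite set but intersects the complement of $D$ in only finitely many points by Proposition~\ref{proposition-2.8}$(1)$. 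Hence the preimage of each ${\uparrow}_{\preccurlyeq}(c_i,d_i)$ under $\rho_{(a,b)}$ is a finite union of up-sets, which by the same proposition is, modulo $D$, a finite set; enlarging the list of ``forbidden'' up-sets in the basic neighbourhood of $\mathbf{0}$ to include these finitely many up-sets (and using that $D$ is already excluded in every basic neighbourhood, plus that $\mathbf{0}\cdot(a,b)=\mathbf{0}$) yields the required neighbourhood. The argument for $\lambda_{(a,b)}$ is symmetric, using Lemma~\ref{lemma-2.4}$(ii)$ and $\mathscr{C}_{\mathbb{Z}}^{\mathbf{0}}(a,b)=S^{\underline{b}_{\uparrow}}$.

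The main obstacle — and the step requiring genuine care rather than routine bookkeeping — is that the preimage $\rho_{(a,b)}^{-1}\big({\uparrow}_{\preccurlyeq}(c,d)\big)$ is \emph{not} itself a finite union of up-sets ${\uparrow}_{\preccurlyeq}(\cdot,\cdot)$ in general: multiplying on the right by $(a,b)$ interacts with the natural partial order in a way that can spread a single up-set's image across a whole $\preccurlyeq$-interval. So the honest version of the argument is: fix a basic neighbourhood $U_{(c_1,d_1),\ldots,(c_k,d_k)}$ of $\mathbf{0}$; then $\mathscr{C}_{\mathbb{Z}}^{\mathbf{0}}\setminus U_{(c_1,d_1),\ldots,(c_k,d_k)} = D\cup{\uparrow}_{\preccurlyeq}(c_1,d_1)\cup\cdots\cup{\uparrow}_{\preccurlyeq}(c_k,d_k)$ is a subset of $\mathscr{C}_{\mathbb{Z}}^{\mathbf{0}}$ that meets the complement of $D$ in only finitely many non-zero points, say $P=\{p_1,\ldots,p_r\}$; I must show $\rho_{(a,b)}^{-1}(D\cup P)$ likewise has finite intersection with the complement of $D$. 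Since $\rho_{(a,b)}^{-1}(D)\supseteq$ (all but finitely many points outside $D$) is itself almost all of the complement of $D$ — because, by Proposition~\ref{proposition-2.8}$(1)$, each point of $\mathscr{C}_{\mathbb{Z}}\setminus D$ has its whole up-set meeting $\mathscr{C}_{\mathbb{Z}}\setminus D$ finitely, and a right translate of a point far inside $D$ lands far inside $S^{\overrightarrow{{\shortmid}a}}\cap D$ — and $\rho_{(a,b)}^{-1}(P)$ is a finite union of up-sets each meeting $\mathscr{C}_{\mathbb{Z}}\setminus D$ finitely, the set $\rho_{(a,b)}^{-1}(D\cup P)\setminus D$ is finite. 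Listing a maximal (hence finite, by Proposition~\ref{proposition-2.8}$(1)$ applied in reverse) antichain of minimal up-set generators covering it produces the desired basic neighbourhood $V(\mathbf{0})$ with $\rho_{(a,b)}(V(\mathbf{0}))\subseteq U_{(c_1,d_1),\ldots,(c_k,d_k)}$, which completes the verification of separate continuity and hence the theorem.
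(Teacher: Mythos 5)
Your overall strategy coincides with the paper's: reduce to continuity of the two translations at $\mathbf{0}$, use Lemma~\ref{lemma-2.4} to describe point-preimages as sets of the form ${\uparrow}_{\preccurlyeq}(\hat{c},\hat{d})$, and cover the preimage of the complement of a basic neighbourhood by $D$ together with finitely many up-sets. However, the justification you give for the central finiteness claim is broken. The fact that makes everything work --- and which the paper isolates at the start of its proof --- is that $D$, and hence the complement $D\cup{\uparrow}_{\preccurlyeq}(c_1,d_1)\cup\cdots\cup{\uparrow}_{\preccurlyeq}(c_k,d_k)$ of any basic neighbourhood, meets the image of the translation (namely $(a,b)\mathscr{C}_{\mathbb{Z}}^{\mathbf{0}}=S^{\overrightarrow{{\shortmid}a}}$ for the left shift and $\mathscr{C}_{\mathbb{Z}}^{\mathbf{0}}(a,b)=S^{\underline{b}_{\uparrow}}$ for the right shift) in only \emph{finitely many} points: only finitely many of the blocks $A_n^d$, $A_n^l$ reach into a fixed half-plane $\{x\geqslant a\}$ or $\{y\geqslant b\}$, and a single up-set meets such a half-plane in a finite segment. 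Once this is observed, the preimage of the bad set is a union of finitely many point-preimages, each contained in one up-set, and the required basic neighbourhood of $\mathbf{0}$ can be written down. You never state or prove this; instead you assert (i) that $\rho_{(a,b)}^{-1}(D)$ ``is itself almost all of the complement of $D$'' and (ii) that ``a right translate of a point far inside $D$ lands far inside $S^{\overrightarrow{{\shortmid}a}}\cap D$''. Both are false, and (i) would, if true, make $\rho_{(a,b)}^{-1}(D\cup P)\setminus D$ infinite and thereby destroy the very conclusion you draw from it. For a concrete counterexample to (ii), take $(a,b)=(0,0)$ and the generator $(-x_n,-i)$ of $A_n^d$ with $i=x_n+1$: its right translate is $(-x_n,-i)\cdot(0,0)=(i-x_n,0)=(1,0)$, whose first coordinate is positive, while every point of $D$ has first coordinate $\leqslant 0$; so deep points of $D$ are translated out of $D$. (In fact $\rho_{(a,b)}^{-1}(D)\setminus D$ is finite --- the opposite of what you claim --- and that finiteness is exactly what your argument needs.)

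Two smaller points. Your worry that $\rho_{(a,b)}^{-1}\big({\uparrow}_{\preccurlyeq}(c,d)\big)$ is ``not a finite union of up-sets in general'' is unfounded, for the same reason as above: ${\uparrow}_{\preccurlyeq}(c,d)$ meets the image half-plane in finitely many points, and the preimage of each such point is either an up-set (by Lemma~\ref{lemma-2.4}) or a singleton, hence is contained in a single up-set. Also, you have interchanged the two half-planes: the image of the right translation $x\mapsto x\cdot(a,b)$ is $\mathscr{C}_{\mathbb{Z}}^{\mathbf{0}}(a,b)=S^{\underline{b}_{\uparrow}}$, not $S^{\overrightarrow{{\shortmid}a}}$.
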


\begin{proof}
By the definition of the topology $\tau_{\footnotesize{\{x_n\}}}^{\footnotesize{\{y_n\}}}$ it is sufficient to prove
that the left and right shifts of $\mathscr{C}_{\mathbb{Z}}^{\mathbf{0}}$ are continuous at zero $\mathbf{0}$.  

Fix an arbitrary basic open neighbourhood $U_{(a_1,b_1),\ldots,(a_k,b_k)}$ of zero $\mathbf{0}$ in $\Big(\mathscr{C}_{\mathbb{Z}}^{\mathbf{0}},\tau_{\footnotesize{\{x_n\}}}^{\footnotesize{\{y_n\}}}\Big)$.

The definition of the topology $\tau_{\footnotesize{\{x_n\}}}^{\footnotesize{\{y_n\}}}$ implies that  there exist finitely many non-zero elements \linebreak $(e_1,f_1),\ldots,(e_m,f_m)$ of the semigroup $\mathscr{C}_{\mathbb{Z}}^{\mathbf{0}}$ with $e_1,\ldots,e_m\geqslant a$ such that
\begin{equation*}
  U_{(a_1,b_1),\ldots,(a_k,b_k)}\cap S^{\overrightarrow{{\shortmid}a}}=S^{\overrightarrow{{\shortmid}a}}\setminus \left({\uparrow}_{\preccurlyeq}(e_1,f_1)\cup\cdots\cup{\uparrow}_{\preccurlyeq}(e_m,f_m)\right).
\end{equation*}

Since $(a,b)\mathscr{C}_{\mathbb{Z}}^{\mathbf{0}}=S^{\overrightarrow{{\shortmid}a}}$ by Lemma~\ref{lemma-2.4}$(ii)$ there exist minimal elements $(\hat{c}_1,\hat{d}_1),\ldots,(\hat{c}_m,\hat{d}_m)$ in $\mathscr{C}_{\mathbb{Z}}$ such that
$$
(a,b)\cdot(\hat{c}_1,\hat{d}_1)=(e_1,f_1),\; \ldots,\;(a,b)\cdot(\hat{c}_m,\hat{d}_m)=(e_m,f_m).
$$

Then the last equalities imply that
$$
(a,b)\cdot U_{(\hat{c}_1,\hat{d}_1),\ldots,(\hat{c}_m,\hat{d}_m)}\subseteq U_{(a_1,b_1),\ldots,(a_k,b_k)}.
$$

Similarly, there exist finitely many non-zero elements $(e_1,f_1),\ldots,(e_p,f_p)$ of the semigroup $\mathscr{C}_{\mathbb{Z}}^{\mathbf{0}}$ with $f_1,\ldots,f_p\geqslant b$ such that
$$
  U_{(a_1,b_1),\ldots,(a_k,b_k)}\cap S^{\underline{b}_{\uparrow}}=S^{\underline{b}_{\uparrow}}\setminus \left({\uparrow}_{\preccurlyeq}(e_1,f_1)\cup\cdots\cup{\uparrow}_{\preccurlyeq}(e_p,f_p)\right).
$$
Since $\mathscr{C}_{\mathbb{Z}}^{\mathbf{0}}(a,b)=S^{\underline{b}_{\uparrow}}$, by Lemma~\ref{lemma-2.4}$(i)$ there exist minimal elements $(\hat{c}_1,\hat{d}_1),\ldots,(\hat{c}_p,\hat{d}_p)$ in $\mathscr{C}_{\mathbb{Z}}$ such that
$$
(\hat{c}_1,\hat{d}_1)\cdot(a,b)=(e_1,f_1),\; \ldots,\;(\hat{c}_p,\hat{d}_p)\cdot(a,b)=(e_p,f_p).
$$
Then the last equalities imply that
$$
U_{(\hat{c}_1,\hat{d}_1),\ldots,(\hat{c}_p,\hat{d}_p)}\cdot(a,b)\subseteq U_{(a_1,b_1),\ldots,(a_k,b_k)},
$$
which completes the proof of the separate continuity of the semigroup operation in $\Big(\mathscr{C}_{\mathbb{Z}}^{\mathbf{0}},\tau_{\footnotesize{\{x_n\}}}^{\footnotesize{\{y_n\}}}\Big)$.
\end{proof}

If in Example~\ref{example-2.7} we put $x_i=y_i$ for any $i\in\mathbb{N}$ and denote $\tau_{\footnotesize{\{x_n\}}}=\tau_{\footnotesize{\{x_n\}}}^{\footnotesize{\{y_n\}}}$ then
\begin{equation*}
\left(U_{(a_1,b_1),\ldots,(a_k,b_k)}\right)^{-1}=U_{(b_1,a_1),\ldots,(b_k,a_k)}
\end{equation*}
for any $a_1,b_1,\ldots,a_k,b_k\in {\mathbb{Z}}$. This and Theorem~\ref{theorem-2.9} imply the following corollary:

\begin{corollary}
$\Big(\mathscr{C}_{\mathbb{Z}}^{\mathbf{0}},\tau_{\footnotesize{\{x_n\}}}\Big)$ is a Hausdorff locally compact semitopological semigroup with continuous inversion.
\end{corollary}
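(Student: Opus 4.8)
The plan is to show that the inversion $\operatorname{inv}\colon\mathscr{C}_{\mathbb{Z}}^{\mathbf{0}}\to\mathscr{C}_{\mathbb{Z}}^{\mathbf{0}}$, $x\mapsto x^{-1}$, is a homeomorphism of $\Big(\mathscr{C}_{\mathbb{Z}}^{\mathbf{0}},\tau_{\footnotesize{\{x_n\}}}\Big)$ onto itself; combining this with Theorem~\ref{theorem-2.9} (separate continuity of the multiplication) and Proposition~\ref{proposition-2.8}$(3)$ (the space is Hausdorff and locally compact) gives the corollary. Since $\operatorname{inv}$ is an involution it suffices to prove it is continuous, and since every non-zero point of $\mathscr{C}_{\mathbb{Z}}^{\mathbf{0}}$ is isolated and $\operatorname{inv}$ permutes these points, continuity is automatic at every non-zero point; the only point at which it has to be verified is $\mathbf{0}$, which $\operatorname{inv}$ fixes.

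The key identity is ${\uparrow}_{\preccurlyeq}(a,b)^{-1}={\uparrow}_{\preccurlyeq}(b,a)$ for every non-zero $(a,b)\in\mathscr{C}_{\mathbb{Z}}^{\mathbf{0}}$, which follows at once from the description ${\uparrow}_{\preccurlyeq}(a,b)=\{(x,y)\colon x-y=a-b,\ x\leqslant a\}$ together with $(x,y)^{-1}=(y,x)$. Using the hypothesis $x_i=y_i$ for all $i\in\mathbb{N}$, one checks that $D$ is invariant under inversion: indeed $A_0^{-1}=A_0$ (this uses $x_1=y_1$ so that the index ranges $\{1,\dots,x_1-1\}$ and $\{1,\dots,y_1-1\}$ coincide), and $\big(A_n^d\big)^{-1}=A_n^l$, $\big(A_n^l\big)^{-1}=A_n^d$ for every positive integer $n$, whence $D^{-1}=D$. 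Since $\operatorname{inv}$ is a bijection it commutes with finite unions and with taking complements, so
\begin{equation*}
\big(U_{(a_1,b_1),\ldots,(a_k,b_k)}\big)^{-1}=\mathscr{C}_{\mathbb{Z}}^{\mathbf{0}}\setminus\big(D^{-1}\cup{\uparrow}_{\preccurlyeq}(a_1,b_1)^{-1}\cup\cdots\cup{\uparrow}_{\preccurlyeq}(a_k,b_k)^{-1}\big)=U_{(b_1,a_1),\ldots,(b_k,a_k)},
\end{equation*}
which is again a member of the base $\mathscr{B}_{\tau_{\footnotesize{\{x_n\}}}}^{\mathbf{0}}$ at $\mathbf{0}$. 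Hence the $\operatorname{inv}$-preimage of every basic neighbourhood of $\mathbf{0}$ is a neighbourhood of $\mathbf{0}$, so $\operatorname{inv}$ is continuous at $\mathbf{0}$, and being an involution it is a homeomorphism.

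I do not anticipate a genuine obstacle: the argument is essentially the bookkeeping identity $\big(U_{(a_1,b_1),\ldots,(a_k,b_k)}\big)^{-1}=U_{(b_1,a_1),\ldots,(b_k,a_k)}$ recorded just before the statement, plus the three properties already established. The only point requiring a little care is making explicit that the symmetrisation $x_i=y_i$ is exactly what forces $D^{-1}=D$ (the blocks $A_n^d$ and $A_n^l$ must run over the same ranges), and that $\operatorname{inv}$ restricts to a bijection of the isolated points, so that no further continuity check is needed away from $\mathbf{0}$.
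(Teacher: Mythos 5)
Your argument is correct and is essentially the paper's own: the authors likewise deduce the corollary from the identity $\left(U_{(a_1,b_1),\ldots,(a_k,b_k)}\right)^{-1}=U_{(b_1,a_1),\ldots,(b_k,a_k)}$ (valid because $x_i=y_i$ forces $D^{-1}=D$) together with Theorem~\ref{theorem-2.9} and Proposition~\ref{proposition-2.8}. You merely make explicit the details the paper leaves implicit, namely ${\uparrow}_{\preccurlyeq}(a,b)^{-1}={\uparrow}_{\preccurlyeq}(b,a)$, the swap $\big(A_n^d\big)^{-1}=A_n^l$, and the fact that continuity of the inversion need only be checked at $\mathbf{0}$.
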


Theorem~\ref{theorem-2.9} implies that on the semigroup $\mathscr{C}_{\mathbb{Z}}^{\mathbf{0}}$ there exist $\mathfrak{c}$ many Hausdorff locally compact shift-continuous topologies. But Lemma~\ref{lemma-2.3} implies the following counterpart of Corollary~1 from \cite{Gutik-2015}:

\begin{corollary}
Every Hausdorff locally compact semigroup topology on the semigroup $\mathscr{C}_{\mathbb{Z}}^{\mathbf{0}}$ is discrete.
\end{corollary}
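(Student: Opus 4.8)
The plan is to deduce this from Lemma~\ref{lemma-2.3} together with the known rigidity of the bicyclic monoid with an adjoined zero, arguing by contradiction. Assume $\tau$ is a non-discrete Hausdorff locally compact semigroup topology on $\mathscr{C}_{\mathbb{Z}}^{\mathbf{0}}$; by Theorem~1 of \cite{Fihel-Gutik-2011} every non-zero element of $\mathscr{C}_{\mathbb{Z}}^{\mathbf{0}}$ is then isolated, so $\mathbf{0}$ is the unique non-isolated point of $\left(\mathscr{C}_{\mathbb{Z}}^{\mathbf{0}},\tau\right)$.

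First I would pass to the subsemigroup $\mathscr{C}_{\mathbb{Z}}^{\mathbf{0}}[0]$. Exactly as in the proof of Proposition~\ref{proposition-2.6}$(iii)$, the identity $\mathscr{C}_{\mathbb{Z}}^{\mathbf{0}}[0]=(0,0)\cdot\mathscr{C}_{\mathbb{Z}}^{\mathbf{0}}\cdot(0,0)$ exhibits $\mathscr{C}_{\mathbb{Z}}^{\mathbf{0}}[0]$ as a continuous retract of $\left(\mathscr{C}_{\mathbb{Z}}^{\mathbf{0}},\tau\right)$ (the map $x\mapsto (0,0)\cdot x\cdot(0,0)$ is a composition of a left and a right translation, hence continuous, and it fixes $\mathscr{C}_{\mathbb{Z}}^{\mathbf{0}}[0]$ pointwise), hence as a closed subspace, hence as a locally compact space by \cite[Corollary~3.3.10]{Engelking-1989}. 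Since the multiplication of $\left(\mathscr{C}_{\mathbb{Z}}^{\mathbf{0}},\tau\right)$ is jointly continuous, its restriction is jointly continuous, so $\mathscr{C}_{\mathbb{Z}}^{\mathbf{0}}[0]$ with the subspace topology is a Hausdorff locally compact topological semigroup; by Corollary~\ref{corollary-2.2} it is topologically isomorphic to the bicyclic monoid with adjoined zero $\mathscr{C}^{\mathbf{0}}$ equipped with some Hausdorff locally compact semigroup topology, which by Lemma~\ref{lemma-2.3} is non-discrete.

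This contradicts Corollary~1 of \cite{Gutik-2015}, which asserts that every Hausdorff locally compact semigroup topology on $\mathscr{C}^{\mathbf{0}}$ is discrete; hence $\tau$ itself must be discrete. If one prefers not to quote that corollary directly, the same conclusion follows from the dichotomy Theorem~1 of \cite{Gutik-2015}: being non-discrete, $\mathscr{C}_{\mathbb{Z}}^{\mathbf{0}}[0]\cong\mathscr{C}^{\mathbf{0}}$ would have to be compact, whence its bicyclic subsemigroup ${\mathscr{C}}(p,q)$ would sit inside a compact topological semigroup, which is impossible since stable — in particular compact — topological semigroups contain no copy of ${\mathscr{C}}(p,q)$ \cite{Anderson-Hunter-Koch-1965}. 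There is essentially no obstacle here: the only point requiring a line of care is that the restriction of a jointly continuous operation to a closed subsemigroup remains jointly continuous, and the one substantive ingredient — non-discreteness of every $\mathscr{C}_{\mathbb{Z}}^{\mathbf{0}}[n]$ — is supplied verbatim by Lemma~\ref{lemma-2.3}.
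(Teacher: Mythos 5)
Your proof is correct and takes essentially the same route as the paper, which deduces the corollary directly from Lemma~\ref{lemma-2.3} together with Corollary~1 of \cite{Gutik-2015}. You merely make explicit the details the paper leaves implicit (that $\mathscr{C}_{\mathbb{Z}}^{\mathbf{0}}[0]$ is a closed retract, hence locally compact, and carries a non-discrete semigroup topology isomorphic to one on $\mathscr{C}^{\mathbf{0}}$), and these details are all sound.
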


\section{Minimal shift-continuous and inverse semigroup topologies on $\mathscr{C}_{\mathbb{Z}}^{\mathbf{0}}$}

The concept of minimal topological groups was introduced independently in the early 1970’s by Do\^{\i}tchinov \cite{Doitchinov-1972} and Stephenson \cite{Stephenson-Jr-1971}. Both authors were motivated by the theory of minimal topological spaces, which was well understood at that time (cf. \cite{Berri-Porter-Stephenson-Jr-1971}). More than 20 years earlier L. Nachbin \cite{Nachbin-1949} had studied minimality in the context of division rings, and B. Banaschewski \cite{Banaschewski-1974} investigated minimality in the more general setting of topological algebras.
The concept of minimal topological semigroups was introduced in \cite{Gutik-Pavlyk-2005}.

\begin{definition}[\cite{Gutik-Pavlyk-2005}]\label{definition-3.1}
A Hausdorff semitopological (resp., topological, topological inverse) semigroup $(S,\tau)$ is said to be \emph{minimal} if no Hausdorff shift-continuous (resp., semigroup, semigroup inverse) topology on $S$ is strictly contained in $\tau$. If $(S,\tau)$ is minimal semitopological (resp., topological, topological inverse) semigroup, then $\tau$ is called \emph{minimal} \emph{shift-continuous} (resp., \emph{semigroup}, \emph{semigroup inverse}) topology.
\end{definition}

It is obvious that every Hausdorff compact shift-continuous (resp., semigroup, semigroup inverse) topology on a semigroup $S$ is a minimal shift-continuous (resp., semigroup, semigroup inverse) topology on $S$. But an infinite semigroup of units admits a unique compact shift-continuous topology and non-compact minimal semigroup and semigroup inverse  topologies \cite{Gutik-Pavlyk-2005}. Similar results were obtained in \cite{Bardyla-Gutik-2019} for the bicyclic monoid with adjoined zero $\mathscr{C}^0$.

\begin{example}\label{example-3.2}
For finitely many $(a_1,b_1),\ldots,(a_k,b_k)\in \mathscr{C}_{\mathbb{Z}}$ we denote
\begin{equation*}
  U_{(a_1,b_1),\ldots,(a_k,b_k)}^{\uparrow}=\mathscr{C}_{\mathbb{Z}}^{\mathbf{0}}\setminus\left({\uparrow}_{\preccurlyeq}(a_1,b_1)\cup\cdots\cup {\uparrow}_{\preccurlyeq}(a_k,b_k) \right).
\end{equation*}

We define a topology $\tau_{\textsf{min}}^{\textsf{sh}}$ on the semigroup $\mathscr{C}_{\mathbb{Z}}^{\mathbf{0}}$ in the following way:
\begin{itemize}
  \item[$(i)$] all non-zero elements of $\mathscr{C}_{\mathbb{Z}}^{\mathbf{0}}$ are isolated points;
  \item[$(ii)$] the family
 $
    \mathscr{B}_{\tau_{\textsf{min}}^{\textsf{sh}}}^{\mathbf{0}}=\left\{ U_{(a_1,b_1),\ldots,(a_k,b_k)}^{\uparrow}\colon (a_1,b_1),\ldots,(a_k,b_k)\in \mathscr{C}_{\mathbb{Z}}, \; k\in\mathbb{N}\right\}
  $
  is the base of the topology $\tau_{\textsf{min}}^{\textsf{sh}}$ at zero $\mathbf{0}$.
\end{itemize}

We observe that by Lemma~\ref{lemma-2.5} the space $\left(\mathscr{C}_{\mathbb{Z}}^{\mathbf{0}},\tau_{\textsf{min}}^{\textsf{sh}}\right)$ is Hausdorff, $0$-dimensional and scattered, and hence it is regular. Since the base $\mathscr{B}_{\tau_{\textsf{min}}^{\textsf{sh}}}^{\mathbf{0}}$ is countable, by the Urysohn Metrization Theorem (see \cite[p.~123, Theorem~16]{Kelley-1975}) the space  $\left(\mathscr{C}_{\mathbb{Z}}^{\mathbf{0}},\tau_{\textsf{min}}^{\textsf{sh}}\right)$ is metrizable and hence by Corollary~4.1.13 of \cite{Engelking-1989} it is perfectly normal.
\end{example}

\begin{proposition}\label{proposition-3.3}
$\left(\mathscr{C}_{\mathbb{Z}}^{\mathbf{0}},\tau_{\textsf{min}}^{\textsf{sh}}\right)$ is a minimal semitopological semigroup with continuous inversion.
\end{proposition}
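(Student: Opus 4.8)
The plan is to verify, in order, that $\tau_{\textsf{min}}^{\textsf{sh}}$ is shift-continuous, that it makes the inversion continuous, and that it is minimal, using throughout that every non-zero element of $\mathscr{C}_{\mathbb{Z}}^{\mathbf{0}}$ is $\tau_{\textsf{min}}^{\textsf{sh}}$-isolated and the lemmas of Section~\ref{section-1}. For separate continuity of the multiplication, since the non-zero elements are isolated it suffices to establish continuity of the left and right translations at $\mathbf{0}$, and here the argument of Theorem~\ref{theorem-2.9} carries over with only notational changes --- in fact it becomes shorter, since the set $D$ is simply deleted from the neighbourhoods. Concretely, given a basic neighbourhood $U_{(a_1,b_1),\ldots,(a_k,b_k)}^{\uparrow}$ of $\mathbf{0}$ and a non-zero $(a,b)$, one uses $(a,b)\mathscr{C}_{\mathbb{Z}}^{\mathbf{0}}=S^{\overrightarrow{{\shortmid}a}}$ to write $U_{(a_1,b_1),\ldots,(a_k,b_k)}^{\uparrow}\cap S^{\overrightarrow{{\shortmid}a}}=S^{\overrightarrow{{\shortmid}a}}\setminus\big({\uparrow}_{\preccurlyeq}(e_1,f_1)\cup\cdots\cup{\uparrow}_{\preccurlyeq}(e_m,f_m)\big)$ with all $e_j\geqslant a$ (the indices $i$ with $a_i<a$ being irrelevant on $S^{\overrightarrow{{\shortmid}a}}$), applies Lemma~\ref{lemma-2.4} to produce minimal elements $(\hat{c}_j,\hat{d}_j)$ with $(a,b)\cdot(\hat{c}_j,\hat{d}_j)=(e_j,f_j)$, and concludes that $(a,b)\cdot U_{(\hat{c}_1,\hat{d}_1),\ldots,(\hat{c}_m,\hat{d}_m)}^{\uparrow}\subseteq U_{(a_1,b_1),\ldots,(a_k,b_k)}^{\uparrow}$; the right translations are handled symmetrically with $\mathscr{C}_{\mathbb{Z}}^{\mathbf{0}}(a,b)=S^{\underline{b}_{\uparrow}}$ and Lemma~\ref{lemma-2.4}$(i)$.

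For continuity of the inversion, I would observe that $\mathbf{0}^{-1}=\mathbf{0}$, $(a,b)^{-1}=(b,a)$, and $(a,b)\preccurlyeq(x,y)$ if and only if $(b,a)\preccurlyeq(y,x)$, so that $\big({\uparrow}_{\preccurlyeq}(a,b)\big)^{-1}={\uparrow}_{\preccurlyeq}(b,a)$ and hence $\big(U_{(a_1,b_1),\ldots,(a_k,b_k)}^{\uparrow}\big)^{-1}=U_{(b_1,a_1),\ldots,(b_k,a_k)}^{\uparrow}$. Thus the inversion maps the base $\mathscr{B}_{\tau_{\textsf{min}}^{\textsf{sh}}}^{\mathbf{0}}$ bijectively onto itself and permutes the isolated non-zero points, so it is a self-homeomorphism of $\big(\mathscr{C}_{\mathbb{Z}}^{\mathbf{0}},\tau_{\textsf{min}}^{\textsf{sh}}\big)$, in particular continuous.

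The substantive point is minimality. Let $\tau'$ be a Hausdorff shift-continuous topology on $\mathscr{C}_{\mathbb{Z}}^{\mathbf{0}}$ with $\tau'\subseteq\tau_{\textsf{min}}^{\textsf{sh}}$; I must show $\tau'=\tau_{\textsf{min}}^{\textsf{sh}}$. First note that every $U_{(a_1,b_1),\ldots,(a_k,b_k)}^{\uparrow}$ is infinite --- it is obtained from $\mathscr{C}_{\mathbb{Z}}$ by removing only finitely many of the chains ${\uparrow}_{\preccurlyeq}(a_i,b_i)$ --- so $\mathbf{0}$ is not $\tau_{\textsf{min}}^{\textsf{sh}}$-isolated; consequently $\tau_{\textsf{min}}^{\textsf{sh}}$, and therefore the coarser $\tau'$, is non-discrete. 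Then Theorem~1 of \cite{Fihel-Gutik-2011} forces all non-zero elements of $\mathscr{C}_{\mathbb{Z}}^{\mathbf{0}}$ to be $\tau'$-isolated, so $\tau'$ and $\tau_{\textsf{min}}^{\textsf{sh}}$ induce the same (discrete) topology on $\mathscr{C}_{\mathbb{Z}}^{\mathbf{0}}\setminus\{\mathbf{0}\}$, and it only remains to show that each basic $\tau_{\textsf{min}}^{\textsf{sh}}$-neighbourhood of $\mathbf{0}$ is $\tau'$-open. Since $U_{(a_1,b_1),\ldots,(a_k,b_k)}^{\uparrow}=U_{(a_1,b_1)}^{\uparrow}\cap\cdots\cap U_{(a_k,b_k)}^{\uparrow}$, it suffices to treat $U_{(a,b)}^{\uparrow}=\mathscr{C}_{\mathbb{Z}}^{\mathbf{0}}\setminus{\uparrow}_{\preccurlyeq}(a,b)$ for a single non-zero $(a,b)$; but $\tau'$ is non-discrete, Hausdorff and shift-continuous, so Lemma~\ref{lemma-2.5} applies to $\big(\mathscr{C}_{\mathbb{Z}}^{\mathbf{0}},\tau'\big)$ and yields that ${\uparrow}_{\preccurlyeq}(a,b)$ is $\tau'$-closed, whence $U_{(a,b)}^{\uparrow}$ is $\tau'$-open. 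Therefore $\tau_{\textsf{min}}^{\textsf{sh}}\subseteq\tau'$, so $\tau'=\tau_{\textsf{min}}^{\textsf{sh}}$.

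I do not expect a genuine obstacle in any of these steps. The first two are routine --- the first merely transcribes the proof of Theorem~\ref{theorem-2.9}, the second is immediate --- and the minimality argument works precisely because $\tau_{\textsf{min}}^{\textsf{sh}}$ is built so that its basic neighbourhoods of $\mathbf{0}$ are the complements of the chains ${\uparrow}_{\preccurlyeq}(a,b)$, which Lemma~\ref{lemma-2.5} already forces to be closed in every non-discrete Hausdorff shift-continuous topology. The one point requiring care is to record at the outset that $\tau'$ is non-discrete, since otherwise neither Lemma~\ref{lemma-2.5} nor Theorem~1 of \cite{Fihel-Gutik-2011} would be available.
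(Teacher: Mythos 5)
Your proposal is correct and follows essentially the same route as the paper: the shift-continuity and inversion arguments transcribe Theorem~\ref{theorem-2.9}, and the minimality rests on Lemma~\ref{lemma-2.5} forcing each ${\uparrow}_{\preccurlyeq}(a,b)$ to be closed in any non-discrete Hausdorff shift-continuous topology. You merely spell out the paper's one-line appeal to Lemma~\ref{lemma-2.5} (including the observation that a topology coarser than the non-discrete $\tau_{\textsf{min}}^{\textsf{sh}}$ is itself non-discrete), which is a welcome but not substantively different elaboration.
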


\begin{proof}
The definition of the topology $\tau_{\textsf{min}}^{\textsf{sh}}$ implies that it is sufficient to prove that the left and right shifts of $\mathscr{C}_{\mathbb{Z}}^{\mathbf{0}}$ are continuous at zero $\mathbf{0}$. 

Fix any non-zero element $(a, b) \in\mathscr{C}_{\mathbb{Z}}^{\mathbf{0}}$ and any basic open neighbourhood $U_{(a_1,b_1),\ldots,(a_k,b_k)}^{\uparrow}$ of zero $\mathbf{0}$ in $\left(\mathscr{C}_{\mathbb{Z}}^{\mathbf{0}},\tau_{\textsf{min}}^{\textsf{sh}}\right)$.

The definition of the topology $\tau_{\textsf{min}}^{\textsf{sh}}$ implies that  there exist finitely many non-zero elements \linebreak $(e_1,f_1),\ldots,(e_m,f_m)$ of the semigroup $\mathscr{C}_{\mathbb{Z}}^{\mathbf{0}}$ with $e_1,\ldots,e_m\geqslant a$ such that
\begin{equation*}
  U_{(a_1,b_1),\ldots,(a_k,b_k)}^{\uparrow}\cap S^{\overrightarrow{{\shortmid}a}}=S^{\overrightarrow{{\shortmid}a}}\setminus \left({\uparrow}_{\preccurlyeq}(e_1,f_1)\cup\cdots\cup{\uparrow}_{\preccurlyeq}(e_m,f_m)\right).
\end{equation*}
Since $(a,b)\mathscr{C}_{\mathbb{Z}}^{\mathbf{0}}=S^{\overrightarrow{{\shortmid}a}}$, by Lemma~\ref{lemma-2.4}$(ii)$ there exist minimal elements $(\hat{c}_1,\hat{d}_1),\ldots,(\hat{c}_m,\hat{d}_m)$ in $\mathscr{C}_{\mathbb{Z}}$ such that
$$
(a,b)\cdot(\hat{c}_1,\hat{d}_1)=(e_1,f_1),\; \ldots,\;(a,b)\cdot(\hat{c}_m,\hat{d}_m)=(e_m,f_m).
$$
Then the last equalities imply that
$$
(a,b)\cdot U_{(\hat{c}_1,\hat{d}_1),\ldots,(\hat{c}_m,\hat{d}_m)}^{\uparrow}\subseteq U_{(a_1,b_1),\ldots,(a_k,b_k)}^{\uparrow}.
$$

Again, by similar way there exists finitely many non-zero elements $(e_1,f_1),\ldots,(e_p,f_p)$ of the semigroup $\mathscr{C}_{\mathbb{Z}}^{\mathbf{0}}$ with $f_1,\ldots,f_p\geqslant b$ such that
$$
  U_{(a_1,b_1),\ldots,(a_k,b_k)}^{\uparrow}\cap S^{\underline{b}_{\uparrow}}=S^{\underline{b}_{\uparrow}}\setminus \left({\uparrow}_{\preccurlyeq}(e_1,f_1)\cup\cdots\cup{\uparrow}_{\preccurlyeq}(e_p,f_p)\right).
$$
Since $\mathscr{C}_{\mathbb{Z}}^{\mathbf{0}}(a,b)=S^{\underline{b}_{\uparrow}}$, Lemma~\ref{lemma-2.4}$(i)$ implies that there exist minimal elements $(\hat{c}_1,\hat{d}_1),\ldots,(\hat{c}_p,\hat{d}_p)$ in $\mathscr{C}_{\mathbb{Z}}$ such that
$$
(\hat{c}_1,\hat{d}_1)\cdot(a,b)=(e_1,f_1),\; \ldots,\;(\hat{c}_p,\hat{d}_p)\cdot(a,b)=(e_p,f_p).
$$
Then the last equalities imply that
$$
U_{(\hat{c}_1,\hat{d}_1),\ldots,(\hat{c}_p,\hat{d}_p)}^{\uparrow}\cdot(a,b)\subseteq U_{(a_1,b_1),\ldots,(a_k,b_k)}^{\uparrow},
$$
which completes the proof of the separate continuity of the semigroup operation in $\left(\mathscr{C}_{\mathbb{Z}}^{\mathbf{0}},\tau_{\textsf{min}}^{\textsf{sh}}\right)$.
Also, since
$$
\left(U_{(a_1,b_1),\ldots,(a_k,b_k)}^{\uparrow}\right)^{-1}=U_{(b_1,a_1),\ldots,(b_k,a_k)}^{\uparrow},
$$
for any $(a_1,b_1),\ldots,(a_k,b_k)\in \mathscr{C}_{\mathbb{Z}}$, the inversion is continuous in $\left(\mathscr{C}_{\mathbb{Z}}^{\mathbf{0}},\tau_{\textsf{min}}^{\textsf{sh}}\right)$ as well.

Lemma~\ref{lemma-2.5} implies that $\tau_{\textsf{min}}^{\textsf{sh}}$ is the coarsest Hausdorff shift-continuous topology on $\mathscr{C}_{\mathbb{Z}}^{\mathbf{0}}$, and hence $\left(\mathscr{C}_{\mathbb{Z}}^{\mathbf{0}},\tau_{\textsf{min}}^{\textsf{sh}}\right)$ is a minimal semitopological semigroup.
\end{proof}

\begin{example}\label{example-3.4}
We define the topology $\tau_{\textsf{min}}^{\textsf{i}}$ on the semigroup $\mathscr{C}_{\mathbb{Z}}^{\mathbf{0}}$ in the following way:
\begin{itemize}
  \item[$(i)$] all non-zero elements of $\mathscr{C}_{\mathbb{Z}}^{\mathbf{0}}$ are isolated points in the topological space $\left(\mathscr{C}_{\mathbb{Z}}^{\mathbf{0}},\tau_{\textsf{min}}^{\textsf{i}}\right)$;
  \item[$(ii)$] the family $\mathscr{B}_{\tau_{\textsf{min}}^{\textsf{i}}}^{\mathbf{0}}=\left\{ S^{\overrightarrow{{\shortmid}a}}\cap S^{\underline{b}_{\uparrow}}\colon a,b\in \mathbb{Z}\right\}$ is the base of the topology $\tau_{\textsf{min}}^{\textsf{i}}$ at zero $\mathbf{0}$.
\end{itemize}

It is obvious that the space  $\left(\mathscr{C}_{\mathbb{Z}}^{\mathbf{0}},\tau_{\textsf{min}}^{\textsf{i}}\right)$ is Hausdorff, $0$-dimensional and scattered, and hence it is regular. Since the base $\mathscr{B}_{\tau_{\textsf{min}}^{\textsf{i}}}^{\mathbf{0}}$ is countable, similar as in Example~\ref{example-3.2} we get that the space  $\left(\mathscr{C}_{\mathbb{Z}}^{\mathbf{0}},\tau_{\textsf{min}}^{\textsf{i}}\right)$ is metrizable.
\end{example}

\begin{proposition}\label{proposition-3.6}
$\left(\mathscr{C}_{\mathbb{Z}}^{\mathbf{0}},\tau_{\textsf{min}}^{\textsf{i}}\right)$ is a minimal topological inverse semigroup.
\end{proposition}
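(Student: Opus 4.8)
The plan is to prove Proposition~\ref{proposition-3.6} in two stages: first that $\left(\mathscr{C}_{\mathbb{Z}}^{\mathbf{0}},\tau_{\textsf{min}}^{\textsf{i}}\right)$ is a Hausdorff topological inverse semigroup, and then that $\tau_{\textsf{min}}^{\textsf{i}}$ is contained in \emph{every} Hausdorff inverse semigroup topology on $\mathscr{C}_{\mathbb{Z}}^{\mathbf{0}}$; the latter gives minimality at once, and in fact shows that $\tau_{\textsf{min}}^{\textsf{i}}$ is the unique minimal such topology.

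For the first stage, Hausdorffness is recorded in Example~\ref{example-3.4}. Since all non-zero points are isolated, continuity of the inversion reduces to the point $\mathbf{0}$, where it follows from $\left(S^{\overrightarrow{{\shortmid}a}}\cap S^{\underline{b}_{\uparrow}}\right)^{-1}=S^{\overrightarrow{{\shortmid}b}}\cap S^{\underline{a}_{\uparrow}}$, and joint continuity of the multiplication reduces to the pairs $(\mathbf{0},\mathbf{0})$, $\left((p,q),\mathbf{0}\right)$ and $\left(\mathbf{0},(r,s)\right)$. For these, using \eqref{eq-0.1} together with $(p,q)\mathscr{C}_{\mathbb{Z}}^{\mathbf{0}}=S^{\overrightarrow{{\shortmid}p}}$ and $\mathscr{C}_{\mathbb{Z}}^{\mathbf{0}}(r,s)=S^{\underline{s}_{\uparrow}}$, one checks that every basic neighbourhood $S^{\overrightarrow{{\shortmid}a}}\cap S^{\underline{b}_{\uparrow}}$ of $\mathbf{0}$ is a subsemigroup, that $(p,q)\cdot\left(S^{\overrightarrow{{\shortmid}c}}\cap S^{\underline{b}_{\uparrow}}\right)\subseteq S^{\overrightarrow{{\shortmid}a}}\cap S^{\underline{b}_{\uparrow}}$ once $c$ is large enough, and symmetrically $\left(S^{\overrightarrow{{\shortmid}a}}\cap S^{\underline{d}_{\uparrow}}\right)\cdot(r,s)\subseteq S^{\overrightarrow{{\shortmid}a}}\cap S^{\underline{b}_{\uparrow}}$ once $d$ is large enough. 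These computations run exactly as in Theorem~\ref{theorem-2.9}, so I would present them compactly.

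For the second stage, let $\tau$ be an arbitrary Hausdorff inverse semigroup topology on $\mathscr{C}_{\mathbb{Z}}^{\mathbf{0}}$. By Theorem~1 of \cite{Fihel-Gutik-2011} all non-zero points are $\tau$-isolated, so it suffices to prove that each $S^{\overrightarrow{{\shortmid}a}}\cap S^{\underline{b}_{\uparrow}}$ is $\tau$-open; this yields $\tau_{\textsf{min}}^{\textsf{i}}\subseteq\tau$. The key lemma is: for every integer $k$, $\mathbf{0}\notin\operatorname{cl}_{\tau}\{(n,n)\colon n\leqslant k\}$. Indeed, if a net $\{(n_\alpha,n_\alpha)\}$ with $n_\alpha\leqslant k$ $\tau$-converged to $\mathbf{0}$, then applying the continuous self-map $x\mapsto(k+1,k+1)\cdot x\cdot(k+1,k+1)$ and using \eqref{eq-0.1} (which sends each $(n_\alpha,n_\alpha)$ to $(k+1,k+1)$) would force the constant net $(k+1,k+1)$ to $\tau$-converge to $\mathbf{0}$, contradicting the Hausdorffness of $\tau$. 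Consequently, writing $E=E\left(\mathscr{C}_{\mathbb{Z}}^{\mathbf{0}}\right)=\{(n,n)\colon n\in\mathbb{Z}\}\cup\{\mathbf{0}\}$, for every integer $m$ the set $E_m=\{(n,n)\colon n\geqslant m\}\cup\{\mathbf{0}\}$ is open in the subspace $E$: its complement $\{(n,n)\colon n\leqslant m-1\}$ in $E$ consists of $\tau$-isolated points which, by the key lemma, do not accumulate at $\mathbf{0}$, hence it is closed in $E$. Now the maps $\varphi\colon x\mapsto xx^{-1}$ and $\psi\colon x\mapsto x^{-1}x$ from $\mathscr{C}_{\mathbb{Z}}^{\mathbf{0}}$ to $E$ are $\tau$-continuous (here joint continuity of the multiplication and continuity of the inversion are both used), and \eqref{eq-0.1} gives $\varphi(u,v)=(u,u)$, $\psi(u,v)=(v,v)$, whence $S^{\overrightarrow{{\shortmid}a}}=\varphi^{-1}(E_a)$ and $S^{\underline{b}_{\uparrow}}=\psi^{-1}(E_b)$ are $\tau$-open. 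Hence $S^{\overrightarrow{{\shortmid}a}}\cap S^{\underline{b}_{\uparrow}}$ is $\tau$-open for all $a,b$, so $\tau_{\textsf{min}}^{\textsf{i}}\subseteq\tau$; since $\tau$ was arbitrary, $\tau_{\textsf{min}}^{\textsf{i}}$ is the coarsest — hence the unique minimal — Hausdorff inverse semigroup topology on $\mathscr{C}_{\mathbb{Z}}^{\mathbf{0}}$.

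The delicate point, and the step I expect to be the main obstacle, is the second stage: each of $S^{\overrightarrow{{\shortmid}a}}=\left(a,a\right)\mathscr{C}_{\mathbb{Z}}^{\mathbf{0}}$ and $S^{\underline{b}_{\uparrow}}=\mathscr{C}_{\mathbb{Z}}^{\mathbf{0}}\left(b,b\right)$ is trivially $\tau$-closed (being the fixed-point set of a one-sided translation by an idempotent), and the content is that they are forced to be $\tau$-open. The only available leverage is the interaction of the chain of idempotents $\{(n,n)\colon n\in\mathbb{Z}\}$ with the continuous maps $x\mapsto xx^{-1}$ and $x\mapsto x^{-1}x$ — which is precisely why continuity of the inversion, not merely shift-continuity, is needed — and pinning down the position of $\mathbf{0}$ with respect to that chain, via the key lemma, is the heart of the matter.
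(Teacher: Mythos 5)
Your proof is correct, and its second (minimality) stage is in substance the paper's own argument: the identity displayed in the paper's proof writes $\mathscr{C}_{\mathbb{Z}}^{\mathbf{0}}\setminus\big(S^{\overrightarrow{{\shortmid}a}}\cap S^{\underline{b}_{\uparrow}}\big)$ as $\varphi^{-1}\big({\uparrow}_{\preccurlyeq}(a{-}1,a{-}1)\big)\cup\psi^{-1}\big({\uparrow}_{\preccurlyeq}(b{-}1,b{-}1)\big)$ for $\varphi(x)=xx^{-1}$, $\psi(x)=x^{-1}x$, which is exactly your decomposition $S^{\overrightarrow{{\shortmid}a}}=\varphi^{-1}(E_a)$, $S^{\underline{b}_{\uparrow}}=\psi^{-1}(E_b)$, since ${\uparrow}_{\preccurlyeq}(a{-}1,a{-}1)=\{(n,n)\colon n\leqslant a-1\}$ is the complement of $E_a$ in $E$. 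The only difference there is that the paper obtains the closedness of this set from Lemma~\ref{lemma-2.5} (it is the preimage of an isolated point under a translation), whereas you re-derive it with your ``key lemma'' via the sandwich map $x\mapsto(k{+}1,k{+}1)\cdot x\cdot(k{+}1,k{+}1)$; both arguments are sound, but citing Lemma~\ref{lemma-2.5} would shorten yours. Where you genuinely diverge is the first stage: the paper does not verify continuity of multiplication and inversion directly, but notes that each basic neighbourhood $S^{\overrightarrow{{\shortmid}a}}\cap S^{\underline{b}_{\uparrow}}$ lies in some $\mathscr{C}_{\mathbb{Z}}^{\mathbf{0}}[n]\cong\mathscr{C}^{\mathbf{0}}$ (Corollary~\ref{corollary-2.2}), on which $\tau_{\textsf{min}}^{\textsf{i}}$ induces the minimal topology of \cite{Bardyla-Gutik-2019}, and then invokes the proof of Lemma~2 of \cite{Gutik-1996}; your direct check (each basic neighbourhood of $\mathbf{0}$ is a subsemigroup, is absorbed by one-sided translations after shrinking, and is mapped onto a basic neighbourhood by inversion) is self-contained and, to my mind, cleaner. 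Both routes in fact give the stronger conclusion that $\tau_{\textsf{min}}^{\textsf{i}}$ is the coarsest, hence the unique minimal, Hausdorff inverse semigroup topology on $\mathscr{C}_{\mathbb{Z}}^{\mathbf{0}}$.
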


\begin{proof}
We have that for any $a,b\in\mathbb{Z}$ and any non-zero element $(x,y)\in\mathscr{C}_{\mathbb{Z}}^{\mathbf{0}}$ there exists an integer $n$ such that $(x,y)\in\mathscr{C}_{\mathbb{Z}}^{\mathbf{0}}[n]$ and $S^{\overrightarrow{{\shortmid}a}}\cap S^{\underline{b}_{\uparrow}}\subseteq \mathscr{C}_{\mathbb{Z}}^{\mathbf{0}}[n]$. By Corollary~\ref{corollary-2.2} the semigroup $\mathscr{C}_{\mathbb{Z}}^{\mathbf{0}}[n]$ is isomorphic to the bicyclic monoid with adjoined zero $\mathscr{C}^\mathbf{0}$. Also, it is obviously that the topology $\tau_{\textsf{min}}^{\textsf{i}}$ induces the topology $\tau$ on $\mathscr{C}_{\mathbb{Z}}^{\mathbf{0}}[n]$ such that $\tau$ generates by the map $h\colon\mathscr{C}_{\mathbb{Z}}^{\mathbf{0}}[n]\rightarrow {\mathscr{C}}^{\mathbf{0}}$, $(a,b)\mapsto q^{a-n}p^{b-n}$ and ${\mathbf{0}}\mapsto {\mathbf{0}}$ the topology $\tau_{\textsf{min}}$ on $\mathscr{C}^\mathbf{0}$ from \cite{Bardyla-Gutik-2019}. Then the proof of Lemma~2 from \cite{Gutik-1996} implies that $\left(\mathscr{C}^\mathbf{0},\tau_{\textsf{min}}\right)$ is a Hausdorff topological semigroup. This and the above arguments imply that $\left(\mathscr{C}_{\mathbb{Z}}^{\mathbf{0}},\tau_{\textsf{min}}^{\textsf{i}}\right)$ is a topological inverse semigroup. The minimality of $\left(\mathscr{C}_{\mathbb{Z}}^{\mathbf{0}},\tau_{\textsf{min}}^{\textsf{i}}\right)$ as topological inverse semigroup follows from Lemma~\ref{lemma-2.5}, because
\begin{equation*}
  \mathscr{C}_{\mathbb{Z}}^{\mathbf{0}}\setminus (S^{\overrightarrow{{\shortmid}a}}\cap S^{\underline{b}_{\uparrow}})= \left\{(x,y)\colon (x,y){\cdot}(x,y)^{-1}\in {\uparrow}_{\preccurlyeq}(a{-}1,a{-}1)\right\}\cup \left\{(x,y)\colon (x,y)^{-1}{\cdot}(x,y)\in {\uparrow}_{\preccurlyeq}(b{-}1,b{-}1)\right\}.
\end{equation*}
\end{proof}

\section*{Acknowledgements}
We acknowledge Taras Banakh and the referee for useful important comments and suggestions.


\end{document}